\newtheorem{thm}{Theorem}[section]
\newtheorem{cor}[thm]{Corollary}
\newtheorem{lem}[thm]{Lemma}
\newtheorem{prop}[thm]{Proposition}
\theoremstyle{definition}
\newtheorem{rem}[thm]{Remark}
\numberwithin{equation}{section}
\newcommand{\supp}{\operatorname{supp}}
\newcommand{\dif}{\,\mathrm{d}}
\newcommand{\charfun}{\ensuremath{\mathbbm 1}}
\DeclareMathOperator{\BMO}{BMO}
\begin{document}
\title{Martingale Inequalities for Spline Sequences}
\author[M. Passenbrunner]{Markus Passenbrunner}
\address{Institute of Analysis, Johannes Kepler University Linz, Austria, 4040
Linz, Altenberger Strasse 69}
\email{markus.passenbrunner@jku.at}
\begin{abstract}
	We show that D.~L\'{e}pingle's $L_1(\ell_2)$-inequality	
	\begin{equation*}
		 \Big\|  \big( \sum_n \mathbb E[f_n | \mathscr F_{n-1}]^2
		\big)^{1/2}\Big\|_1 \leq 2\cdot  \Big\| \big( \sum_n f_n^2
		\big)^{1/2} \Big\|_1, \qquad f_n\in\mathscr F_n,
	\end{equation*}
	extends to the case where we substitute the conditional expectation
	operators with orthogonal projection operators onto spline spaces and where we can
	allow that $f_n$ is contained in a suitable spline space $\mathscr
	S(\mathscr F_n)$.
	This is done provided the
	filtration $(\mathscr F_n)$ satisfies a certain regularity  condition
	depending on the degree of smoothness of the functions contained in
	$\mathscr S(\mathscr F_n)$. 
	As a by-product, we also obtain a spline version of $H_1$-$\BMO$ duality
	under this assumption.
\end{abstract}
\subjclass[2010]{65D07, 60G42, 42C10}
\maketitle 

\section{Introduction}
This article is part of a series of papers that extend martingale results to
polynomial spline sequences of arbitrary order (see e.g. \cite{Shadrin2001, PassenbrunnerShadrin2014,
Passenbrunner2014, MuellerPassenbrunner2017,Passenbrunner2018, Passenbrunner2017,
KeryanPassenbrunner2017}).
In order to explain those martingale type results, we have to introduce a little bit of
terminology: Let $k$ be a positive integer, $(\mathscr F_n)$ an increasing
sequence of $\sigma$-algebras of sets in $[0,1]$ where each $\mathscr F_n$ is generated 
by a finite partition of $[0,1]$ into intervals of positive length. Moreover,
define the spline space
\[
	\mathscr S_k(\mathscr F_n) = \{f\in C^{k-2}[0,1] : f\text{ is a polynomial of order $k$ on
	each atom of $\mathscr F_n$} \}
\]
and  let $P_n^{(k)}$ be the orthogonal projection operator onto
$\mathscr S_k(\mathscr F_n)$ with respect to the
$L_2$  inner product on $[0,1]$  with the Lebesgue measure  $|\cdot|$.  The
space $\mathscr S_1(\mathscr F_n)$ consists of
piecewise constant functions and $P_n^{(1)}$ is the conditional expectation
operator with respect to the $\sigma$-algebra $\mathscr F_n$.
 Similarly to the definition of martingales, we introduce the following
notion:
let $(f_n)_{n\geq 0}$ be a sequence of integrable functions. We call this
sequence a \emph{$k$-martingale spline sequence} (adapted to $(\mathscr F_n)$)
if, for all $n$,
\[
	P_n^{(k)} f_{n+1} = f_n.
\]
For basic facts about martingales and conditional expectations, we refer to
\cite{Neveu1975}.

 Classical martingale theorems such as Doob's
	inequality or the martingale convergence theorem 
	 in fact carry over to
	$k$-martingale spline sequences corresponding to  \emph{arbitrary}
	filtrations ($\mathscr F_n$) of the above type, just by replacing
	conditional expectation operators by the projection operators
	$P_n^{(k)}$. Indeed, we have
		\begin{enumerate}[(i)]
			\item\label{it:splines1} (Shadrin's theorem) 
				there exists
				a constant $C_k$ depending only on $k$ such that 
				\[
					\sup_n\| P_n^{(k)} : L_1 \to L_1 \| \leq
					C_k,
				\]
			\item \label{it:splines2}
					(Doob's weak type inequality for
					splines) \\
				there exists a constant $C_k$  depending only on
				$k$ such that for any $k$-martingale
				spline sequence
				$(f_n)$ and any
				$\lambda>0$, 
				\begin{equation*}
					|\{ \sup_n |f_n| > \lambda \}| \leq C_k
				\frac{\sup_n\|f_n\|_{1}}{
				\lambda},
				\end{equation*}

			\item\label{it:splines3}(Doob's $L_p$ inequality for
			splines) \\
						for all $p\in (1,\infty]$ there exists a constant
				$C_{p,k}$  depending only on $p$ and
				$k$ such that for all $k$-martingale
				spline sequences
				$(f_n)$,
				\begin{equation*}
					\big\| \sup_n |f_n| \big\|_{p}
					\leq C_{p,k}
					\sup_n\|f_n\|_{p},\ 
				\end{equation*}
			\item\label{it:splines4}
				(Spline convergence theorem)\\
				if $(f_n)$ is an
				$L_1$-bounded $k$-martingale spline sequence, then
				$(f_n)$ converges
				 almost surely to some $L_1$-function,
			\item\label{it:splines5} (Spline convergence theorem,
				$L_p$-version)\\
				for $1<p<\infty$,
				if $(f_n)$ is an $L_p$-bounded $k$-martingale
				spline sequence, then $(f_n)$ converges almost
				surely and in $L_p$.
		 \end{enumerate}

	Property (i) is proved in \cite{Shadrin2001}, properties (ii) and (iii)
	in \cite{PassenbrunnerShadrin2014} and properties (iv) and (v)
	 in
	 \cite{MuellerPassenbrunner2017}, but see also \cite{Passenbrunner2018}.

Here, we continue this line of transferring martingale results to $k$-martingale
spline sequences and extend D.~L\'{e}pingle's $L_1(\ell_2)$-inequality
\cite{Lepingle1978}, which reads
\begin{equation}
	\label{eq:lepingle}
	 \Big\|  \big( \sum_n \mathbb E[f_n | \mathscr F_{n-1}]^2
	\big)^{1/2}\Big\|_1 \leq 2\cdot  \Big\| \big( \sum_n f_n^2
	\big)^{1/2} \Big\|_1,
\end{equation}
provided the sequence of (real-valued) random variables $f_n$ is adapted to the filtration
$(\mathscr F_n)$, i.e., each $f_n$ is $\mathscr F_n$-measurable. The spline
version of this inequality is contained in Theorem \ref{thm:lepingle_splines}.

This inequality is an $L_1$ extension of the following result for
$1<p<\infty$, proved by
E.~M.~Stein
\cite{Stein1970a}, that holds for \emph{arbitrary} integrable functions $f_n$:
\begin{equation}\label{eq:stein_inequality}
	\Big\| \big( \sum_n \mathbb E[f_n |\mathscr F_{n-1}]^2
	\big)^{1/2}\Big\|_p \leq a_p \Big\| \big( \sum_n f_n^2\big)^{1/2}
	\Big\|_p,
\end{equation}
for some constant $a_p$ depending only on $p$. 
This can be seen as a dual version of Doob's inequality $\| \sup_{\ell}
|\mathbb E[f | \mathscr F_\ell]| \|_p \leq c_p \|f\|_p$ for $p>1$, see
\cite{AsmarMontgomery1993}. Once we know Doob's inequality for spline
projections, which is point (iii) above, the same proof as in
\cite{AsmarMontgomery1993} works for
spline projections if we use suitable positive operators $T_n$ instead of
$P_n^{(k)}$ that also satisfy Doob's inequality and dominate the
operators $P_n^{(k)}$ pointwise (cf. Sections~\ref{subsect:T} and \ref{subsect:stein}).

The usage of those operators $T_n$ is also necessary in the extension of
inequality \eqref{eq:lepingle} to splines. D.~L\'{e}pingle's proof of \eqref{eq:lepingle} rests on an
idea by C. Herz \cite{Herz1974} of splitting $\mathbb E [f_n \cdot h_n]$ (for
$f_n$ being $\mathscr F_n$-measurable) by
Cauchy-Schwarz after introducing the square function $S_n^2 =
\sum_{\ell\leq n} f_\ell^2$:
\begin{equation}\label{eq:herz_idea}
	(\mathbb E[f_n\cdot h_n])^2 \leq \mathbb E[ f_n^2/S_n ] \cdot \mathbb
	E[S_n h_n^2]
\end{equation}
and estimating both factors on the right hand side separately. A key point in
estimating the second factor is that $S_n$ is $\mathscr F_n$-measurable, and
therefore, $\mathbb E[S_n|\mathscr F_n]=S_n$. If we
want to allow $f_n\in \mathscr S_k(\mathscr F_n)$, $S_n$ will
 not be
contained in $\mathscr S_k(\mathscr F_n)$ in general.
Under certain conditions on the filtration $(\mathscr F_n)$, we will show in
this article how to substitute $S_n$ in estimate \eqref{eq:herz_idea} by a
function $g_n\in\mathscr S_k(\mathscr F_n)$ that enjoys similar properties to
$S_n$ and allows us to proceed (cf. Section~\ref{subsect:mainduality}, in
particular Proposition~\ref{prop:choose_g} and Theorem~\ref{thm:main_duality}).
As a by-product, we obtain a spline version (Theorem \ref{thm:duality_splines}) of C.~Fefferman's theorem
\cite{Fefferman1971} on $H^1$-$\BMO$ duality.
For its martingale version, we
refer to A.~M.~Garsia's book \cite{Garsia1973} on Martingale Inequalities.

\section{Preliminaries}
In this section, we collect all tools that are needed subsequently.
\subsection{Properties of polynomials}
We will need 
Remez' inequality for polynomials:
\begin{thm} \label{thm:remez}
	Let $V\subset \mathbb R$ be a compact interval in $\mathbb R$ and $E\subset V$ a measurable
	subset. Then, for all polynomials $p$ of order $k$ (i.e. degree $k-1$) on $V$,
	\begin{equation*}
		\| p \|_{L_\infty(V)} \leq \bigg( 4 \frac{|V|}{|E|}\bigg)^{k-1} \| p
		\|_{L_\infty(E)}.
	\end{equation*}
\end{thm}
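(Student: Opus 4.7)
The plan is to reduce to the sharp Chebyshev form of Remez' inequality and then estimate the Chebyshev polynomial crudely.  By the affine bijection $V \to [-1,1]$ I may normalize $V = [-1,1]$ and write $\sigma := |E|/|V| \in (0,1]$, so that the claim becomes
\[
  \|p\|_{L_\infty([-1,1])} \le (4/\sigma)^{k-1}\,\|p\|_{L_\infty(E)}.
\]

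The core step is the sharp inequality
\[
  \|p\|_{L_\infty([-1,1])} \le T_{k-1}\!\Big(\tfrac{2-\sigma}{\sigma}\Big)\,\|p\|_{L_\infty(E)},
\]
where $T_{k-1}$ denotes the Chebyshev polynomial of the first kind of degree $k-1$.  After normalizing $\|p\|_{L_\infty(E)}=1$, this says that if the sublevel set $\{|p|\le 1\}$ has measure at least $2\sigma$ in $[-1,1]$, then $\|p\|_{L_\infty([-1,1])} \le T_{k-1}((2-\sigma)/\sigma)$.  I would prove it by a comparison with an affinely rescaled Chebyshev polynomial $\widetilde T$ that equioscillates between $\pm 1$ on an interval $J$ of length $2\sigma$ (positioned to meet $E$ in a set of full measure after a preliminary measure-theoretic reduction) and attains $\pm T_{k-1}((2-\sigma)/\sigma)$ at the endpoint of $[-1,1]$ farthest from $J$: if $|p(x_0)|$ were to exceed the Chebyshev value at some $x_0\in[-1,1]$, then $\widetilde T\pm p$ (with the sign chosen suitably) would alternate in sign at the $k$ equioscillation nodes of $\widetilde T$ \emph{and} at $x_0$, producing more than $k-1$ sign changes on $[-1,1]$; since $\widetilde T\pm p$ has degree at most $k-1$, this is a contradiction.

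Finally, for $y\ge 1$ I parametrize $y = (u + u^{-1})/2$ with $u = y + \sqrt{y^{2}-1}\in[1,2y]$ and compute
\[
  T_{k-1}(y) \;=\; \tfrac12\bigl(u^{k-1}+u^{-(k-1)}\bigr) \;\le\; u^{k-1} \;\le\; (2y)^{k-1};
\]
applied with $y = (2-\sigma)/\sigma \le 2/\sigma$, this gives $T_{k-1}((2-\sigma)/\sigma) \le (4/\sigma)^{k-1} = (4|V|/|E|)^{k-1}$ and the theorem follows.  The main obstacle is the sharp Chebyshev comparison step, which rests on the extremality/equioscillation property of $T_{k-1}$; the normalization and the crude Chebyshev estimate are routine.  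An alternative that avoids invoking Chebyshev extremality is to select $k$ well-separated points from $E$ by a pigeonhole partition of $V$ and estimate $p$ via Lagrange interpolation, but this route typically yields a less clean constant and requires additional work to recover the stated factor $(4|V|/|E|)^{k-1}$.
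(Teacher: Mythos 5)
The paper itself gives no proof of this statement: Remez' inequality is stated as a classical preliminary (and used only through Corollary~\ref{cor:remez}), so your attempt can only be measured against the standard literature proof. Your outer architecture is correct and does produce exactly the stated constant: normalizing $V=[-1,1]$, the sharp Remez inequality gives $\|p\|_{L_\infty(V)}\le T_{k-1}\bigl((2-\sigma)/\sigma\bigr)\|p\|_{L_\infty(E)}$ with $\sigma=|E|/|V|$, and your estimate $T_{k-1}(y)=\tfrac12\bigl(u^{k-1}+u^{-(k-1)}\bigr)\le u^{k-1}\le(2y)^{k-1}$ with $y=(2-\sigma)/\sigma\le 2/\sigma$ is a clean way to get the factor $(4|V|/|E|)^{k-1}$.

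The genuine gap is in your proof of the core sharp inequality. The ``preliminary measure-theoretic reduction'' you invoke does not exist: for a general measurable $E$ there is no interval $J$ with $|J|=|E|$ that meets $E$ in full measure (take $E$ split between small neighborhoods of the two endpoints of $V$, or $E$ a fat Cantor set containing no interval at all). Your equioscillation comparison with a single rescaled Chebyshev polynomial $\widetilde T$ on such a $J$ therefore only proves the \emph{interval} case of Remez, i.e.\ the classical Chebyshev growth estimate outside an interval on which $|p|\le\|p\|_{L_\infty(E)}$. Passing from an interval to an arbitrary measurable set of the same measure is precisely the content of Remez' theorem and requires a genuine extremal argument (e.g.\ working with the sublevel set $\{x\in V:|p(x)|\le\|p\|_{L_\infty(E)}\}$, which is a union of up to $k$ intervals, and a zero-counting/extremal-polynomial analysis as in Borwein--Erd\'elyi); it is exactly the step you cannot wave away, and note that even the paper's only application, Corollary~\ref{cor:remez}, uses $E$ equal to a sublevel set, hence a union of several intervals, not one interval. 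If you want a self-contained argument, your alternative Lagrange-interpolation route (choosing $k$ nodes in $E$ separated by roughly $|E|/k$ via the distribution function of $E$) does handle arbitrary measurable $E$ directly, but it yields a constant of the order $(2e|V|/|E|)^{k-1}$ rather than $(4|V|/|E|)^{k-1}$; that weaker form would still suffice for the paper after adjusting the numerical threshold $8^{-k+1}$ in Corollary~\ref{cor:remez}, but it does not prove the theorem as stated. As written, the proposal either has to cite sharp Remez as a black box or close this reduction; in its current form the key step fails.
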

Applying this theorem with the set $E =  \{x\in V : |p(x)| \leq 8^{-k+1}\|p\|_{L_\infty(V)} \}$
immediately yields the following corollary{:}
\begin{cor} \label{cor:remez}
Let $p$ be a polynomial of order  $k$ on a compact interval $V\subset \mathbb R$. Then
\begin{equation*}
	\big|\big\{ x \in V : |p(x)| \geq 8^{-k+1} \|p\|_{L_\infty(V)} \big\}\big| \geq |V|/2.
\end{equation*}
\end{cor}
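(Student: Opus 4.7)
The plan is to apply Theorem~\ref{thm:remez} directly to the set
\[
	E = \{x\in V : |p(x)| \leq 8^{-k+1}\|p\|_{L_\infty(V)}\}
\]
suggested in the paragraph preceding the corollary, and then take complements. Observe that if the conclusion of the corollary fails, then $|V\setminus E| < |V|/2$, which forces $|E| > |V|/2$. So it suffices to derive a contradiction from $|E| > |V|/2$, or equivalently, to show directly that $|E|\leq |V|/2$.

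To do this, I would plug $E$ into Remez' inequality. By the definition of $E$, we have the pointwise bound $\|p\|_{L_\infty(E)} \leq 8^{-k+1}\|p\|_{L_\infty(V)}$, and Theorem~\ref{thm:remez} gives
\[
	\|p\|_{L_\infty(V)} \leq \Big(\frac{4|V|}{|E|}\Big)^{k-1}\|p\|_{L_\infty(E)} \leq \Big(\frac{4|V|}{|E|}\Big)^{k-1} 8^{-k+1}\|p\|_{L_\infty(V)}.
\]
Assuming $p$ is not identically zero (otherwise the corollary is trivial) we may divide by $\|p\|_{L_\infty(V)}$, and the factors of $4$ and $8$ combine to $1/2$, leaving
\[
	1 \leq \Big(\frac{|V|}{2|E|}\Big)^{k-1}.
\]
For $k\geq 2$ this yields $|E| \leq |V|/2$ as desired; for $k=1$ polynomials of order~$1$ are constant, so the set $\{|p|\geq \|p\|_{L_\infty(V)}\}$ is all of $V$ and the statement is trivial.

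The main (and really only) subtlety is making sure the constant $8^{-k+1}$ is sharp enough to make the algebra close: one needs the product $4^{k-1}\cdot 8^{-k+1}$ to be at most $(1/2)^{k-1}$, which works out exactly. There is no deeper obstacle; the content is entirely in Remez' inequality itself.
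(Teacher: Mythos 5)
Your argument is correct and is exactly the paper's intended proof: the paper simply states that applying Theorem~\ref{thm:remez} to the set $E=\{x\in V:|p(x)|\leq 8^{-k+1}\|p\|_{L_\infty(V)}\}$ ``immediately yields'' the corollary, and you have filled in precisely that computation, with the constants $4^{k-1}\cdot 8^{-k+1}=2^{-(k-1)}$ closing the estimate as you note. Your separate treatment of the trivial cases ($p\equiv 0$ and $k=1$, where the exponent $k-1=0$ gives no information but the statement is immediate for constants) is a harmless refinement of the same approach.
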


\subsection{Properties of spline functions}
For an interval $\sigma$-algebra $\mathscr F$ (i.e., $\mathscr F$ is generated
by a finite collection of intervals having positive length),
the space $\mathscr S_k(\mathscr F)$ is spanned by a very special local basis $(N_i)$, the so
called B-spline basis. It has the properties that each $N_i$ is
non-negative and each support of $N_i$ consists of at most $k$ neighboring atoms
of $\mathscr F$. Moreover, $(N_i)$ is a partition of unity, i.e., for all
$x\in[0,1]$, there exist at most $k$ functions $N_i$ so that $N_i(x)\neq 0$ and $\sum_i
N_i(x)=1$. 
In the following, we denote by $E_i$ the support
of the B-spline function $N_i$. The usual ordering of the B-splines $(N_i)$--which
we also employ here--is such that for all $i$, $\inf E_i \leq \inf
E_{i+1}$ and $\sup E_i \leq \sup E_{i+1}$.

We write $A(t)\lesssim B(t)$ to denote the existence of a constant
$C$ such that for all $t$, $A(t)\leq C B(t)$, where $t$ denote all implicit and
explicit dependencies the expression $A$ and $B$ might have. If the constant $C$
additionally depends on some parameter, we will indicate this in the text.
Similarly, the symbols $\gtrsim$ and $\simeq$ are used.

Another important property of B-splines is the following relation between
B-spline coefficients and the $L_p$-norm of the corresponding B-spline
expansions. 
\begin{thm}[B-spline stability, local and global]\label{thm:stability} 
Let $1\leq p\leq \infty$ and $g=\sum_{j} a_j N_j$. Then, for all $j$,
\begin{equation}\label{eq:lpstab}
|a_j|\lesssim |J_j|^{-1/p}\|g\|_{L_p(J_j)},
\end{equation}
where $J_j$ is an atom of $\mathscr F$ contained in $E_j$ 
having maximal length. 
Additionally,
\begin{equation}\label{eq:deboorlpstab}
\|g\|_p\simeq \| (a_j|E_j|^{1/p})\|_{\ell_p},
\end{equation}
where in both \eqref{eq:lpstab} and \eqref{eq:deboorlpstab}, the implied
constants depend only on the spline order $k$.
\end{thm}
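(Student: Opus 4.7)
The plan is to establish the pointwise local estimate \eqref{eq:lpstab} first, and then to combine it with the partition-of-unity property of the B-splines to obtain both directions of the global equivalence \eqref{eq:deboorlpstab}.

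For \eqref{eq:lpstab} I would first reduce to the case $p=\infty$: since $g$ restricted to the atom $J_j$ is a polynomial of order $k$, an affine rescaling of $J_j$ to a reference interval together with Corollary~\ref{cor:remez} yields $\|g\|_{L_\infty(J_j)} \lesssim |J_j|^{-1/p}\|g\|_{L_p(J_j)}$ with a constant depending only on $k$. It then remains to prove $|a_j| \lesssim \|g\|_{L_\infty(J_j)}$. For this step I would invoke the classical de Boor--Fix dual functionals $\lambda_j$, biorthogonal to the B-splines ($\lambda_j(N_i) = \delta_{ij}$), viewed as linear functionals acting on the polynomial piece $g|_{J_j}$; their operator norm on polynomials of order $k$ on any atom $J\subseteq E_j$ can be bounded by a constant depending only on $k$. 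Since $a_j = \lambda_j(g|_{J_j})$, this gives the desired local bound.

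For the upper direction of \eqref{eq:deboorlpstab} I would use that at most $k$ B-splines are non-zero on any given atom together with $\sum_i N_i = 1$. Jensen's inequality gives $|g(x)|^p \leq k^{p-1}\sum_i |a_i|^p N_i(x)$ for $1\leq p<\infty$, and integrating together with the trivial bound $\int N_j \leq |E_j|$ produces $\|g\|_p^p \lesssim \sum_j |a_j|^p |E_j|$; the case $p=\infty$ is immediate from the partition of unity. For the lower direction I would sum the local estimate \eqref{eq:lpstab}: since $J_j$ is the longest among the at most $k$ atoms composing $E_j$, we have $|J_j| \geq |E_j|/k$, hence $|a_j|^p |E_j| \lesssim \|g\|_{L_p(J_j)}^p$. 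Finally, because each atom of $\mathscr F$ can serve as $J_j$ for at most $k$ different indices $j$, summing over $j$ produces $\sum_j |a_j|^p |E_j| \lesssim \|g\|_p^p$, as required.

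The main obstacle I anticipate is the construction of the biorthogonal dual functional $\lambda_j$ with operator norm depending only on $k$, independently of the mesh geometry. This is delicate since the B-splines restricted to a single atom $J_j$ are polynomials whose shapes depend on the entire knot configuration of $E_j$ and can be arbitrarily skewed when neighboring atoms of $E_j$ have very different lengths. The classical remedy is the explicit de Boor--Fix representation of $\lambda_j$ through derivatives of a dual polynomial evaluated at a suitable point of $E_j$, from which the $k$-dependent bound can be read off after an affine rescaling; everything else in the proof is a fairly mechanical bookkeeping once this local building block is in place.
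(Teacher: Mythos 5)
The paper does not prove Theorem~\ref{thm:stability} at all: it quotes it and points to the standard spline literature (Schumaker; DeVore--Lorentz), and your argument is essentially the classical proof found there --- de Boor--Fix dual functionals for the local estimate, reduction from $L_p$ to $L_\infty$ on an atom via Corollary~\ref{cor:remez}, and partition of unity plus summation of the local bounds for the global equivalence. The route is sound, but one claim is stated too strongly, and it is exactly the delicate point you yourself identify: the dual functional $\lambda_j$ is \emph{not} bounded by a constant $C_k$ on polynomials in the sup norm of an \emph{arbitrary} atom $J\subseteq E_j$. Writing $\lambda_j(f)=\frac{1}{(k-1)!}\sum_{\nu<k}\pm\,\psi_j^{(k-1-\nu)}(\tau_j)\,f^{(\nu)}(\tau_j)$ with $\tau_j\in J$ and $\psi_j$ the usual dual polynomial built from the interior knots of $E_j$, Markov's inequality gives $|f^{(\nu)}(\tau_j)|\lesssim |J|^{-\nu}\|f\|_{L_\infty(J)}$ while $|\psi_j^{(k-1-\nu)}(\tau_j)|\lesssim |E_j|^{\nu}$, so the bound one obtains is of size $(|E_j|/|J|)^{k-1}\|f\|_{L_\infty(J)}$, which blows up for short atoms; indeed the estimate $|a_j|\lesssim\|g\|_{L_\infty(J)}$ is false for $k\geq 3$ when $J$ is much shorter than $E_j$, which is the whole reason the theorem singles out the atom of maximal length. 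Your application is saved precisely by the choice $J=J_j$: since $E_j$ consists of at most $k$ atoms, $|E_j|\leq k|J_j|$, and the constant depends only on $k$ --- the same inequality you already invoke for the lower bound in \eqref{eq:deboorlpstab}. With the claim restated for $J_j$ (or any atom of length comparable to $|E_j|$), the rest is correct as written: the reduction to $p=\infty$ via Corollary~\ref{cor:remez} needs no rescaling, the Jensen step even holds with constant $1$ because $\sum_i N_i=1$, and the counting argument (each atom occurs as $J_j$ for at most $k$ indices $j$) closes the lower bound.
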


Observe that \eqref{eq:lpstab} implies for $g\in \mathscr S_k(\mathscr F)$ and
any measurable set $A\subset [0,1]$
\begin{equation}\label{eq:stab_estimate}
	\|g\|_{L_\infty(A)} \lesssim \max_{j : |E_j\cap A|>0} \|
	g\|_{L_\infty(J_j)}.
\end{equation}

We will also need the following relation between the B-spline expansion of a
function and its expansion using B-splines of a finer grid.
\begin{thm}\label{thm:expansion}
	Let $\mathscr G\subset \mathscr F$ be two interval $\sigma$-algebras and
	denote by $(N_{\mathscr G,i})_i$ the B-spline basis of the coarser
	space $\mathscr S_k(\mathscr G)$ and by $(N_{\mathscr F,i})_i$ the
	B-spline basis 
	of the finer space $\mathscr S_k(\mathscr F)$.
	Then, given $f=\sum_{j} a_j N_{\mathscr G,j}$, we can
	expand $f$ in the basis $(N_{\mathscr F,i})_i$
	\[
		\sum_j a_j N_{\mathscr G,j} = \sum_i b_i N_{\mathscr F,i},
	\]
	where for each $i$, $b_i$ is a convex combination of the
	coefficients $a_j$ with $\supp N_{\mathscr G,j} \supseteq \supp
	N_{\mathscr F,i}$. 
\end{thm}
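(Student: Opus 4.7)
The plan is to reduce the assertion to the case of inserting a single new knot and then apply Boehm's knot insertion formula. Since $\mathscr G$ and $\mathscr F$ are generated by finite interval partitions with $\mathscr G\subseteq\mathscr F$, I interpolate a finite chain
\[
	\mathscr G=\mathscr G_0\subseteq \mathscr G_1\subseteq\cdots\subseteq \mathscr G_M=\mathscr F
\]
in which $\mathscr G_{\ell+1}$ differs from $\mathscr G_\ell$ by splitting exactly one atom, i.e.\ by inserting one new break-point. Because a convex combination of convex combinations is again a convex combination and the support-containment relation is transitive, it suffices to prove the statement for a single knot insertion and then iterate along the chain.

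In this one-knot setting I would apply Boehm's formula, which expresses each refined coefficient as
\[
	b_i=\alpha_i\, a_i+(1-\alpha_i)\, a_{i-1},\qquad \alpha_i\in[0,1],
\]
with $\alpha_i$ an explicit ratio of coarse knot distances, equal to $0$ or $1$ outside a short index range around the inserted knot and strictly in $(0,1)$ inside it. Positivity of $\alpha_i$ and $1-\alpha_i$ is immediate from its form, while $\alpha_i+(1-\alpha_i)=1$ exhibits $b_i$ as a convex combination of two coarse coefficients. Using that $N_{\mathscr G,j}$ is supported on $k$ consecutive atoms of $\mathscr G$, a direct index check then shows that whenever $\alpha_i\in(0,1)$ the support of $N_{\mathscr F,i}$ is contained in both $\supp N_{\mathscr G,i-1}$ and $\supp N_{\mathscr G,i}$; at the boundary values $\alpha_i\in\{0,1\}$ only the one nontrivial coefficient must satisfy the containment, which again holds by inspection.

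The main obstacle is the bookkeeping in this single-knot verification: one needs a consistent convention for the coarse and fine knot sequences (including the endpoints $0$ and $1$, treated as knots of multiplicity $k$) and a short case analysis based on the position of the inserted knot relative to the B-spline index $i$. Once the single-knot case is in hand, iterating along $(\mathscr G_\ell)$ yields the full theorem, since closure under composition, preservation of positivity and of the partition-of-unity property (obtained by applying the formula to $f\equiv 1$), and transitivity of set inclusion are all formal. An alternative to writing out Boehm's step directly is to quote the Oslo refinement algorithm, whose knot-insertion matrices are known to be stochastic with the required support structure.
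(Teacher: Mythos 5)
Your proof is correct, and it coincides with the standard argument: the paper itself gives no proof of Theorem~\ref{thm:expansion} but refers to \cite{Schumaker1981} and \cite{DeVoreLorentz1993}, where exactly this knot-insertion reasoning (Boehm's single-knot formula, respectively the Oslo algorithm with its stochastic refinement matrices) is used. Reducing to one inserted breakpoint, checking the support containments for the two weighted coefficients, and iterating via transitivity and composition of convex combinations is precisely the intended route.
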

For those results and more information on spline functions, in
particular B-splines, we refer to \cite{Schumaker1981} or
\cite{DeVoreLorentz1993}.
We now use the B-spline basis of $\mathscr S_k(\mathscr F)$ and expand the 
orthogonal projection operator $P$ onto $\mathscr
S_k(\mathscr F)$  
in the form
\begin{equation}\label{eq:formula_P}
	Pf = \sum_{i,j} a_{ij} \Big(\int_0^1 f(x) N_i(x)\dif x\Big)\cdot
	N_j
\end{equation}
for some coefficients $(a_{ij})$. Denoting 
by $E_{ij}$ the
smallest interval containing both supports $E_i$ and $E_j$ of the B-spline
functions $N_i$ and $N_j$ respectively, we have the following
estimate for $a_{ij}$ \cite{PassenbrunnerShadrin2014}: 
there exist constants $C$ and $0<q<1$ depending only on $k$ so that for each
interval $\sigma$-algebra $\mathscr F$ and each $i,j$,
\begin{equation}\label{eq:geomdecay}
	|a_{ij}| \leq C \frac{q^{|i-j|}}{|E_{ij}|}.
\end{equation}

\subsection{Spline square functions} 
Let $(\mathscr F_n)$ be a sequence of increasing interval $\sigma$-algebras in $[0,1]$
and we assume that each $\mathscr F_{n+1}$ is generated from $\mathscr F_n$ by
the subdivision of exactly one atom of $\mathscr F_n$ into two atoms of
$\mathscr F_{n+1}$. Let $P_n$ be the orthogonal projection operator onto
$\mathscr S_k(\mathscr F_n)$. We denote  $\Delta_n f = P_n f - P_{n-1}
f$ and define the spline square function
\[
	Sf = \Big(\sum_n |\Delta_n f|^2\Big)^{1/2}.
\]
We have Burkholder's inequality for the spline square function, i.e.,
for all $1<p<\infty$ (\cite{Passenbrunner2014}), the $L_p$-norm of
the square function $Sf$ is comparable to the $L_p$-norm of $f$:
\begin{equation}\label{eq:Sf-pnorm}
	\|Sf\|_p \simeq \|f\|_p,\qquad f\in L_p
\end{equation}
with constants depending only on $p$ and $k$. Moreover, for $p=1$, it is shown
in \cite{4people2015} that
\begin{equation}\label{eq:Sf-1norm}
	\|Sf\|_1 \simeq \sup_{\varepsilon\in\{-1,1\}^{\mathbb Z}} \| \sum_n
	\varepsilon_n \Delta_n f \|_1, \qquad Sf\in L_1,
\end{equation}
with constants depending only on $k$ and where the proof of the $\lesssim$-part only 
uses Khintchine's inequality whereas the proof of the 
$\gtrsim$-part uses fine properties of the functions $\Delta_n f$.

\subsection{$L_p(\ell_q)$-spaces}\label{sec:Lplq}
For $1\leq p,q\leq \infty$, we denote by $L_p(\ell_q)$ the space of sequences of
measurable functions $(f_n)$ on $[0,1]$ so that the norm
\[
	\|(f_n)\|_{L_p(\ell_q)} = \Big(\int_0^1 \Big( \sum_{n} |f_n(t)|^q
	\Big)^{p/q}\dif t\Big)^{1/p}
\]
is finite (with the obvious modifications if $p=\infty$ or $q=\infty$).
For $1\leq p,q <\infty$, the dual space (see \cite{BenedekPanzone1961}) of 
$L_p(\ell_q)$ is $L_{p'}(\ell_{q'})$
with $p'=p/(p-1)$, $q'=q/(q-1)$ and the duality pairing
\[
	\langle (f_n), (g_n)\rangle = \int_0^1 \sum_n f_n(t) g_n(t) \dif t.
\]
H\"older's inequality takes the form  $|\langle (f_n), (g_n)\rangle| \leq \|(f_n)\|_{L_p(\ell_q)}
\|(g_n)\|_{L_{p'}(\ell_{q'})}$.

\section{Main Results}
In this section, we prove our main results. Section~\ref{subsect:T} defines and
gives properties of suitable positive operators that dominate our (non-positive)
operators $P_n$ pointwise. In Section~\ref{subsect:stein}, we use those operators to give
a spline version of Stein's inequality \eqref{eq:stein_inequality}. A useful
property of conditional expectations is the tower property $\mathbb E_{\mathscr
G} \mathbb E_{\mathscr F} f = \mathbb E_{\mathscr G} f$ for $\mathscr G\subset
\mathscr F$. In this form, it extends to the operators $(P_n)$, but not to the
operators $T$ from Section~\ref{subsect:T}. In Section~\ref{subsect:tower} we
prove a version of the tower property for those operators.
Section~\ref{subsect:mainduality} is devoted to establishing a duality estimate 
using a spline square function, which is the crucial ingredient in the proofs
of the spline versions of both L\'epingle's inequality \eqref{eq:lepingle} and
$H_1$-$\BMO$ duality in Section \ref{sec:apps}.

\subsection{The positive operators $T$}\label{subsect:T}
As above, let $\mathscr F$ be an interval $\sigma$-algebra on $[0,1]$, $(N_i)$
the B-spline basis of $\mathscr S_k(\mathscr F)$, $E_i$ the support of $N_i$ and
$E_{ij}$ the smallest
interval containing both $E_i$ and $E_j$.
Moreover, let $q$ be a positive
number smaller than $1$. Then, we define the linear operator $T = T_{\mathscr F, q, k}$
by
\[
	Tf(x) := \sum_{i,j}
	\frac{q^{|i-j|}}{|E_{ij}|} \langle f,
	\charfun_{E_i}\rangle \charfun_{E_j}(x) = \int_0^1 
	K(x,t) f(t)\dif t,
\]
where 
the kernel $K=K_{T}$ is given by
\[
	K(x,t) =  \sum_{i,j} \frac{q^{|i-j|}}{|E_{ij}|}\charfun_{E_i}(t)\cdot
	\charfun_{E_j}(x).
\]

We observe that the operator $T$ is selfadjoint (w.r.t the standard inner
product on $L_2$) and
\begin{equation}\label{eq:kernel}
	k\leq K_x := \int_0^1 K(t,x) \dif
	t\leq \frac{2(k+1)}{1-q},\qquad x\in[0,1],
\end{equation}
which, in particular, implies the boundedness of the operator $T$ on $L_1$ and
$L_\infty$:
\[
	\|Tf\|_1 \leq \frac{2(k+1)}{1-q} \|f\|_1,\qquad \|Tf\|_\infty \leq
	\frac{2(k+1)}{1-q}\|f\|_\infty.
\]
Another very important property of $T$ is that it is a positive operator, i.e. it maps non-negative functions to
non-negative functions and that $T$ satisfies Jensen's inequality in the form
\begin{equation}
	\label{eq:jensen}
	\varphi(Tf(x)) \leq K_x^{-1} T\big(\varphi( K_x
	\cdot f)\big)(x),\qquad f\in L_1, x\in [0,1],
\end{equation}
for convex functions $\varphi$. This is seen by applying the classical Jensen
inequality to the probability measure $K(t,x)\dif t/K_x$.

Let $\mathscr Mf $ denote the Hardy-Littlewood maximal function of $f\in L_1$,
i.e.,
\[
	\mathscr M f(x) = \sup_{I\ni x} \frac{1}{|I|}\int_I |f(y)|\dif y,
\]
where the supremum is taken over all subintervals of $[0,1]$ that contain the
point $x$. This operator is of weak type $(1,1)$, i.e.,
\[
	|\{ \mathscr M f > \lambda \}| \leq C \lambda^{-1} \|f\|_1, \qquad f\in
	L_1, \lambda>0
\]
for some constant $C$. Since trivially we have the estimate $\|\mathscr Mf\|_\infty \leq
\|f\|_\infty$, by Marcin\-kiewicz interpolation, for any $p>1$, there exists a
constant $C_p$ depending only on $p$
so that
\[
	\|\mathscr Mf\|_p \leq C_p \|f\|_p. 
\]
For those assertions about $\mathscr M$, we refer to (for instance)
\cite{Stein1970}.

The significance of $T$ and $\mathscr M$ at this point is that we can use
formula \eqref{eq:formula_P} and estimate 
\eqref{eq:geomdecay}  to obtain the pointwise bound
\begin{equation}\label{eq:bound}
	|Pf(x)| \leq C_1 (T|f|)(x) \leq C_2 \mathscr Mf(x),\qquad f\in L_1,
	x\in[0,1],
\end{equation}
where $T=T_{\mathscr F,q,k}$ with $q$ given by \eqref{eq:geomdecay} and
$C_1,C_2$ are two constants solely depending on $k$. In other words, the
positive operator $T$
dominates the non-positive operator $P$ pointwise, but at the same time, $T$ is
dominated by $\mathscr M$ pointwise independently of $\mathscr F$.

\subsection{Stein's inequality for splines}\label{subsect:stein}
We now use this pointwise dominating, positive operator $T$ to prove Stein's inequality
for spline projections.
For this, let $(\mathscr F_n)$ be an interval filtration on $[0,1]$ and 
$P_n$ be the orthogonal projection operator onto the space $\mathscr
S_k(\mathscr F_n)$
of splines of order $k$
corresponding to $\mathscr F_n$. 
Working with the positive operators $T_{\mathscr F_n, q, k}$ instead of the non-positive operators
$P_n$,
the proof of Stein's inequality \eqref{eq:stein_inequality} for spline
projections can be carried over from the martingale case (cf.
\cite{Stein1970a,AsmarMontgomery1993}).
For completeness, we include it here.

\begin{thm}
	Suppose that $(f_n)$ is a sequence of arbitrary integrable functions on $[0,1]$.
	Then, for $1\leq r\leq p<\infty$ or $1<p\leq r\leq \infty$,
	\begin{equation}
	\label{eq:stein}
		\| (P_n f_n) \|_{L_p(\ell_r)} \lesssim \| (f_n)
		\|_{L_p(\ell_r)}
	\end{equation}
	where the implied constant depends only on $p,r$ and $k$.
\end{thm}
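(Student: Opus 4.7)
The plan is to follow the classical martingale argument of Asmar and Montgomery \cite{AsmarMontgomery1993}, with the positive operators $T_n:=T_{\mathscr F_n,q,k}$ from Section~\ref{subsect:T} playing the role conditional expectations play in the martingale setting. The key ingredients are already in place: by \eqref{eq:bound} we have the pointwise chain $|P_n f|\le C_1 T_n|f|\le C_2\,\mathscr M f$, each $T_n$ is selfadjoint, each $P_n$ is uniformly bounded on $L_p$ for every $1\le p\le\infty$ (from Shadrin's theorem together with the $L_2$-isometry and interpolation), and $\mathscr M$ is of strong type $(p,p)$ for every $p>1$. Since $|P_n f_n|\le C_1 T_n|f_n|$, we may replace $f_n$ by $|f_n|$ from the outset and assume $f_n\ge 0$ throughout.

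I would then establish three endpoint inequalities and obtain the theorem by mixed-norm interpolation. The diagonal case $r=p$, $1\le p\le\infty$, is immediate:
\[
\|(P_n f_n)\|_{L_p(\ell_p)}^p \;=\; \sum_n \|P_n f_n\|_p^p \;\lesssim\; \sum_n \|f_n\|_p^p \;=\; \|(f_n)\|_{L_p(\ell_p)}^p.
\]
For $r=\infty$ and $1<p\le\infty$, one combines the pointwise estimate
\[
\sup_n |P_n f_n(x)| \;\le\; C_2\sup_n \mathscr M f_n(x) \;\le\; C_2\,\mathscr M\bigl(\sup_n f_n\bigr)(x)
\]
with the $L_p$-boundedness of $\mathscr M$. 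For $r=1$ and $1\le p<\infty$, I would proceed by duality: using positivity and selfadjointness of $T_n$,
\[
\Big\|\sum_n |P_n f_n|\Big\|_p \;\le\; C_1\Big\|\sum_n T_n f_n\Big\|_p \;=\; C_1\sup_{g\ge 0,\,\|g\|_{p'}\le 1}\int \sum_n f_n\cdot T_n g,
\]
and then
\[
\int\sum_n f_n\cdot T_n g \;\le\; \Big\|\sum_n f_n\Big\|_p \bigl\|\sup_n T_n g\bigr\|_{p'} \;\lesssim\; \Big\|\sum_n f_n\Big\|_p \,\|g\|_{p'},
\]
where the last step invokes $\sup_n T_n g\le C_2\,\mathscr M g$ together with the maximal theorem (for $p=1$ the trivial bound $\|\mathscr M g\|_\infty\le\|g\|_\infty$ suffices).

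With these three endpoints in hand, vector-valued Riesz--Thorin interpolation in the mixed-norm $L_p(\ell_r)$-scale finishes the proof: for $1\le r\le p<\infty$ one interpolates between the pairs $(p,1)$ and $(p,p)$, and for $1<p\le r\le\infty$ between $(p,p)$ and $(p,\infty)$; the restrictions on $p$ in the statement are precisely those needed to keep both endpoints available. I do not anticipate a substantive obstacle, the argument being a faithful spline transcription of Stein's proof. The only point that genuinely requires care is that positivity and selfadjointness, which both fail for $P_n$ itself, are used in the $r=1$ endpoint and are exactly what the operators $T_n$ were designed to provide.
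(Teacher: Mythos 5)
Your argument is correct, and its core coincides with the paper's: both reduce via \eqref{eq:bound} to the positive, selfadjoint operators $T_n$, and both obtain the crucial inner endpoint $r=1$, $1\leq p<\infty$, from the maximal-function bound $\sup_n T_n g\lesssim \mathscr M g$ by duality (the paper phrases this through the adjoint $U_N^*:f\mapsto (T_jf)_{j\le N}$, i.e.\ exactly the estimate \eqref{eq:doobTn}, while you unwind the same duality by hand; the positivity step $|T_jf|\leq T_j|f|$ is used identically). Where you genuinely diverge is in covering the remaining exponents: the paper passes from $r=1$ to $1<r\leq p$ by the Jensen-type inequality \eqref{eq:jensen} together with \eqref{eq:kernel}, which gives $\sum_j|T_jg_j|^r\lesssim\sum_jT_j(|g_j|^r)$ and lets one reapply the $r=1$ case with outer exponent $p/r$, and then obtains $1<p\leq r\leq\infty$ by duality and selfadjointness of $T_j$; you instead prove the endpoint $r=\infty$, $1<p\leq\infty$ directly from $\sup_n T_n f_n\lesssim \mathscr M(\sup_n f_n)$ and interpolate in the inner index between $(p,1)$, $(p,p)$ and $(p,p)$, $(p,\infty)$. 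Your route is valid, but it rests on the Riesz--Thorin theorem for mixed-norm spaces (equivalently $[L_p(\ell_{r_0}),L_p(\ell_{r_1})]_\theta=L_p(\ell_r)$, including the $\ell_\infty$ inner endpoint), which you should cite explicitly, e.g.\ \cite{BenedekPanzone1961}; the paper's Jensen-plus-duality scheme buys the same range using only scalar-valued tools already set up in Section~\ref{subsect:T}, at the cost of the slightly less transparent reduction to outer exponent $p/r$. A cosmetic remark: you do not actually need the uniform $L_p$-bounds for $P_n$ via Shadrin's theorem, since the diagonal case for $T_n$ follows already from \eqref{eq:kernel} and interpolation, which keeps the whole proof at the level of the operators $T_n$.
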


\begin{proof}
	By \eqref{eq:bound}, it suffices to prove this inequality for the
	operators $T_n=T_{\mathscr F_n,q,k}$ with $q$ given by
	\eqref{eq:geomdecay} instead of the operators $P_n$.
	First observe that for $r=p=1$, the assertion follows from Shadrin's
	theorem ((i) on page \pageref{it:splines1}).
	Inequality \eqref{eq:bound} and the $L_{p'}$-boundedness of $\mathscr M$
for $1<p'\leq\infty$ 
	imply that
	\begin{equation}\label{eq:doobTn}
		\big\| \sup_{1\leq n\leq N} |T_n f| \big\|_{p'} \leq C_{p',k} \| f
		\|_{p'}, \qquad f\in L_{p'}
	\end{equation}
	with a constant $C_{p',k}$ depending on $p'$ and $k$.
	Let $1\leq p<\infty$ and $U_N : L_{p}(\ell_1^N) \to L_{p}$ be given by
	$(g_1,\ldots,g_N)\mapsto \sum_{j=1}^N T_j g_j$. Inequality
	\eqref{eq:doobTn} implies the boundedness of the adjoint $U_N^*
	: L_{p'}\to L_{p'}(\ell_\infty^N)$, $f\mapsto (T_j f)_{j=1}^N$ for
	$p'=p/(p-1)$ by a constant independent of $N$ and a fortiori
	the boundedness of $U_N$. Since $|T_j f|
	\leq T_j|f|$ by the positivity of $T_j$, letting $N\to\infty$ implies \eqref{eq:stein}
	for $T_n$ instead of $P_n$ in the case $r=1$ and outer parameter $1\leq
	p < \infty$.

	If $1<r\leq p$, we use Jensen's inequality \eqref{eq:jensen} and
	estimate \eqref{eq:kernel} to obtain
	\[
		\sum_{j=1}^N |T_j g_j|^r \lesssim \sum_{j=1}^N T_j(|g_j|^r) 
	\]
	and apply the result for $r=1$ and the outer parameter $p/r$ to get the
	result for $1\leq r\leq p<\infty$.
	The cases $1<p\leq r\leq \infty$ now just follow from this result using
	duality and the self-adjointness of $T_j$.
\end{proof}

\subsection{Tower property of $T$}\label{subsect:tower}
Next, we will prove a substitute of the tower property $\mathbb E_{\mathscr G}
\mathbb E_{\mathscr F} f=\mathbb E_{\mathscr G}f$ $(\mathscr G\subset \mathscr
F)$ for conditional expectations
that applies to the operators $T$.

To formulate this result, we need a suitable notion of regularity for
$\sigma$-algebras which we now describe.
Let $\mathscr F$ be an interval $\sigma$-algebra,  
let $(N_j)$ be the B-spline basis of $\mathscr S_k(\mathscr F)$ and denote by $E_{j}$
the support of the function $N_j$. 
The \emph{$k$-regularity parameter $\gamma_k(\mathscr F)$} is defined as 
\[
	\gamma_k(\mathscr F) := \max_i \max( |E_i| / |E_{i+1}|, |E_{i+1}| /
	|E_i| ),
\]
where the first maximum is taken over all $i$ so that $E_i$ and $E_{i+1}$ are
defined. The name $k$-regularity is motivated by the fact that each B-spline
support $E_i$ of order $k$ consists of at most $k$ (neighboring) atoms of the $\sigma$-algebra
$\mathscr F$.
\begin{prop}[Tower property of $T$]\label{prop:propertiesT}
	Let $\mathscr G\subset \mathscr F$ be two interval $\sigma$-algebras on $[0,1]$. 
	Let $S = T_{\mathscr G,\sigma,k}$ and
	 $T=T_{\mathscr F,\tau,k'}$  
			 for some $\sigma,\tau\in(0,1)$ and
			some positive integers $k,k'$.
			Then, for all $q>\max(\tau,\sigma)$,  there exists a
			constant $C$ depending on $q,k,k'$ so that 
			\begin{equation}
				\label{eq:tower}
				|ST f(x)| \leq C \cdot\gamma^k 
				\cdot(T_{\mathscr G,q,k} |f|)(x),\qquad f\in
				L_1, x\in[0,1], 
			\end{equation}
			where $\gamma = \gamma_{k}(\mathscr G)$ denotes the
			$k$-regularity parameter of $\mathscr G$.
\end{prop}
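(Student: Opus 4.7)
The plan is to reduce to $f\geq 0$ by the positivity of $S$ and $T$, and then to compare integral kernels. Writing $S=T_{\mathscr G,\sigma,k}$ using the B-spline basis of $\mathscr S_k(\mathscr G)$ with supports $F_j$, and exploiting the self-adjointness of $T$, I expand
\[
STf(x)=\sum_{j:\,x\in F_j}\sum_i\frac{\sigma^{|i-j|}}{|F_{ij}|}\int_0^1 T\charfun_{F_i}(t)\,f(t)\dif t,
\]
so that it suffices to prove, for every admissible $j$ and every $t\in[0,1]$,
\[
\sum_i\frac{\sigma^{|i-j|}}{|F_{ij}|}T\charfun_{F_i}(t)\lesssim \gamma^k\sum_{i':\,t\in F_{i'}}\frac{q^{|i'-j|}}{|F_{i'j}|}.
\]

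The crux of the argument is a pointwise decay estimate: for $t\in F_{i'}$,
\[
T\charfun_{F_i}(t)\leq C_{k,k'}\,\tau^{(|i-i'|-c_{k,k'})_+}.
\]
To establish this, I would expand $T\charfun_{F_i}(t)$ by the explicit kernel of $T$ in the $\mathscr F$-B-spline basis with supports $E_m$, apply the crude bound $|E_\ell\cap F_i|\leq |E_{\ell m}|$, and observe that only indices $\ell$ with $E_\ell\cap F_i\neq\emptyset$ contribute. Since $\mathscr G\subset\mathscr F$ means every $\mathscr G$-atom is a union of $\mathscr F$-atoms, each such $\ell$ is separated in $\mathscr F$-index from any $m$ with $t\in E_m\subset F_{i'}$ by at least $|i-i'|-c_{k,k'}$, where $c_{k,k'}$ absorbs the at most $k$ atoms of $\mathscr G$ inside each $F_\cdot$ and at most $k'$ atoms of $\mathscr F$ inside each $E_\cdot$. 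Summing the geometric series $\sum_\ell \tau^{|\ell-m|}$ over the relevant range and then over the at most $k'$ indices $m$ with $t\in E_m$ yields the claimed decay.

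The remaining step is the summation in $i$. The elementary inequality $|i-j|+|i-i'|\geq|i'-j|$ together with $q>\max(\sigma,\tau)$ gives
\[
\sigma^{|i-j|}\tau^{|i-i'|}\leq q^{|i'-j|}(\sigma/q)^{|i-j|}(\tau/q)^{|i-i'|},
\]
so that after absorbing the universal factor $\tau^{-c_{k,k'}}$, the resulting sum in $i$ is a doubly geometric tail, summable independently of $\mathscr F$ and $\mathscr G$. To pass from $|F_{ij}|$ to $|F_{i'j}|$ in the denominators, I would invoke the $k$-regularity of $\mathscr G$: adjacent B-spline supports satisfy $|F_{i+1}|/|F_i|\leq\gamma$, so the bounded index shift by $c_{k,k'}$ costs a constant that is absorbed, while the irreducible accumulated factor works out to $\gamma^k$, coming from comparing $|F_i|$ with $|F_{i'}|$ across the up to $k$ overlapping $\mathscr G$-atoms in neighbouring B-spline supports.

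The main obstacle is precisely this geometric bookkeeping across two scales and possibly distinct orders $k\neq k'$: one must track how the finer $\mathscr F$-B-spline supports $E_\ell$ straddle $\mathscr G$-B-spline boundaries, handle the crude but essentially tight estimate $|E_\ell\cap F_i|\leq |E_{\ell m}|$ without losing any exponential decay, and ensure that the two separate decays (from $\sigma$ and $\tau$) are converted into a single decay in $q$ via the triangle-type inequality above, producing the constant $c_{k,k'}$ in the exponent and the clean factor $\gamma^k$ in the final bound.
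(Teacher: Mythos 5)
Your reduction to a kernel inequality and your use of the self-adjointness of $T$ are fine, and the separation estimate $|\ell-m|\geq|i-i'|-c_{k,k'}$ (hence $T\charfun_{F_i}(t)\lesssim \tau^{(|i-i'|-c_{k,k'})_+}$) is correct. The gap is in the last step, where you convert $|F_{ij}|^{-1}$ into $|F_{i'j}|^{-1}$: the relevant index shift is from $i$ to $i'$, which is \emph{not} bounded by $c_{k,k'}$ -- it ranges over the whole sum -- and the regularity parameter only gives $|F_{i'j}|\lesssim \gamma^{|i-i'|}|F_{ij}|$ (up to polynomial factors), an \emph{exponentially growing} loss. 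Your exponential gains $\sigma^{|i-j|}\tau^{|i-i'|}$ cannot beat this when $\gamma$ is large relative to $q/\tau$, and indeed the inequality you reduce to after the crude bound $|E_\ell\cap F_i|\leq|E_{\ell m}|$ is simply false in that regime. Concretely, take $k=k'=1$, $\mathscr G=\mathscr F$ with atoms $I_0,\dots,I_M$ of lengths $|I_m|=\gamma^m\varepsilon$, $\sigma=\tau$, and $x\in I_0=F_j$, $t\in I_d=F_{i'}$. The single term $i=j$ of your majorized sum is $\tau^{(d-c)_+}/|I_0|$, while the target is $\gamma\, q^{d}/|F_{i'j}|$ with $|F_{i'j}|\simeq\gamma^{d}|I_0|$; you would need $(\gamma\tau/q)^d\lesssim\gamma$, which fails for large $d$ whenever $\gamma\tau>q$, even though the proposition itself (with the honest kernel, where this term carries the factor $|I_j|/|I_{ji'}|\simeq\gamma^{-d}$) remains true.

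The underlying problem is that the bound $|E_\ell\cap F_i|\leq|E_{\ell m}|$ is not ``essentially tight'': it discards exactly the length-scale information in the denominators $|E_{\ell m}|$, which grow with the distance from $F_i$ to $t$ and are what ultimately produce the factor $|F_{i'j}|^{-1}$ in the conclusion; exponential decay in the \emph{index} cannot reconstruct this for arbitrary finite $\gamma$. The paper's proof keeps the measure ratio intact: it bounds $\frac{|G_m\cap F_r|}{|G_{im}||F_{\ell r}|}\leq\frac{2}{|G_{im}\cup F_{\ell r}|}$ using $G_m\cap F_r\subset G_{im}\cap F_{\ell r}$, notes the inclusion $G_{im}\cup F_{\ell r}\supseteq (G_{ij}\setminus G_j)\cup G_i$ valid for \emph{all} indices in the sum, and only then invokes $k$-regularity once -- to compare $|(G_{ij}\setminus G_j)\cup G_i|$ with $|G_{ij}|$ across the at most $k$ atoms constituting $G_j$ -- which is where the clean factor $\gamma^k$ comes from; the index decay is summed afterwards, exactly as in your last display. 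To repair your argument you would have to replace the crude bound by an estimate retaining $|E_{\ell m}|\gtrsim$ (length of the hull of $F_i$ and the atom containing $t$), at which point you are essentially redoing the paper's set-inclusion step.
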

\begin{proof}
	Let $(F_i)$ be the collection of B-spline supports in $\mathscr
	S_{k'}(\mathscr
	F)$ and $(G_i)$ the
	collection of B-spline supports in $\mathscr S_{k}(\mathscr G)$. 
	Moreover, we denote by
	$F_{ij}$ the smallest interval containing $F_i$ and $F_j$ and by
	$G_{ij}$ the smallest interval containing $G_i$ and $G_j$.

	We show  \eqref{eq:tower} by showing   the following inequality for the kernels
	$K_S$ of $S$ and $K_T$ of $T$
	(cf. \eqref{eq:kernel})
	\begin{equation}\label{eq:kernelproduct}
		\int_0^1 K_{S}(x,t) K_{T}(t,s)\dif t
		\leq C \gamma^k
		\sum_{i,j} \frac{q^{|i-j|}}{|G_{ij}|}
		\charfun_{G_i}(x) \charfun_{G_j}(s),\qquad x,s\in[0,1]
	\end{equation}
	for all $q>\max(\tau,\sigma)$ and some constant $C$ depending on
	$q,k,k'$. In order to prove this inequality, we first fix $x,s\in [0,1]$ 
	and choose $i$ such that $x\in G_i$ and $\ell$ such that $s\in F_\ell$.
	Moreover, based on $\ell$, we choose $j$ so that $s\in G_j$ and $G_j \supset F_\ell$.
	There are at most $k$ choices for each of the indices $i,\ell,j$ and without
	restriction, we treat those choices separately, i.e., we only have to
	estimate the expression
	\[
	\sum_{m,r} \frac{\sigma^{|m-i|} \tau^{|r-\ell|} |G_m \cap
	F_r|}{|G_{im}| |F_{\ell r}|}.
	\]
	Since, for each $r$, there are also at most $k+k'-1$ indices $m$ so that
	$|G_m\cap F_r| >0$ (recall that $\mathscr G\subset \mathscr F$), we
	choose one such index $m=m(r)$ and estimate 
	\[
		\Sigma = \sum_r \frac{\sigma^{|m(r)-i|} \tau^{|r-\ell|} |G_{m(r)} \cap
			F_r|}{|G_{i,m(r)}| |F_{\ell r}|}.
	\]
	Now, observe that for any parameter choice of $r$ in the above sum, 
	\[
		G_{i,m(r)} \cup F_{\ell r} \supseteq (G_{ij}\setminus G_j) \cup
		G_i
	\]
	and therefore, since also $G_{m(r)}\cap F_r \subset G_{i,m(r)} \cap
	F_{\ell r}$,
	\[
		\Sigma \leq \frac{2}{|(G_{ij}\setminus G_j) \cup G_i|}\sum_r
		\sigma^{|m(r) - i|}\tau^{|r-\ell|},
	\]
	which, using the $k$-regularity parameter $\gamma= \gamma_{k}(\mathscr G)$ of
	the $\sigma$-algebra $\mathscr G$ and denoting $\lambda =
	\max(\tau,\sigma)$, we estimate by
	\begin{align*}
		\Sigma&\leq \frac{2\gamma^k}{|G_{ij}|} \sum_m \lambda^{|m-i|}\sum_{r:
		m(r)=m} \lambda^{|r-\ell|} \lesssim
		\frac{\gamma^k}{|G_{ij}|}\sum_m \lambda^{|i-m| + |m-j|} \\
		&\lesssim \frac{\gamma^k}{|G_{ij}|} \big(|i-j|+1\big)\lambda^{|i-j|},
	\end{align*}
	where the implied constants depend on $\lambda,k,k'$ and
	the estimate $\sum_{r: m(r)=m} \lambda^{|r-\ell|} \lesssim
	\lambda^{|m-j|}$ used the fact that, essentially, there are more atoms
	of $\mathscr F$ between $F_r$ and $F_\ell$ (for $r$ as in the sum) 
	than atoms of $\mathscr G$
	between $G_m$ and $G_j$. 
	Finally, we see that for any $q>\lambda$, 
	\[
		\Sigma \lesssim C\gamma^k\frac{q^{|i-j|}}{|G_{ij}|}
	\]
	for some constant $C$ depending on $q,k,k'$,
	and, as $x\in G_i$ and $s\in G_j$, this shows inequality
	\eqref{eq:kernelproduct}.
\end{proof}

As a corollary of Proposition \ref{prop:propertiesT}, we have
\begin{cor}\label{cor:tool_lepingle}
	Let $(f_n)$ be functions in $L_1$. We denote by $P_n$ the
	orthogonal projection onto $\mathscr S_{k}(\mathscr F_n)$ and by $P_n'$ the
	orthogonal projection onto $\mathscr S_{k'}(\mathscr F_n)$ for some
	positive integers $k,k'$. Moreover, let $T_n$ be
	the operator $T_{\mathscr F_n, q, k}$ from \eqref{eq:bound} dominating
	$P_n$ pointwise.

	Then, for
	any integer~$n$ and for any $1\leq p\leq \infty$,
	\[
		\Big\|\sum_{\ell\geq n} P_n \big((P_{\ell-1}' f_\ell)^2\big)
		\Big\|_p \lesssim
		\Big\|\sum_{\ell\geq n} T_n \big((P_{\ell-1}' f_\ell)^2\big)
		\Big\|_p
		\lesssim \gamma_{k}(\mathscr F_n)^k\cdot \Big\| \sum_{\ell\geq n}
		f_\ell^2\Big\|_p,
	\]
	where the implied constants only depend on $k$ and $k'$. 
\end{cor}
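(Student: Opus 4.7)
The plan is to prove both inequalities via a chain of pointwise estimates and finish with the uniform $L_p$-boundedness of a $T$-type operator. The first $\lesssim$-inequality is immediate: since the functions $(P_{\ell-1}'f_\ell)^2$ are non-negative, the pointwise bound \eqref{eq:bound} yields $|P_n((P_{\ell-1}'f_\ell)^2)|\le C_1\,T_n((P_{\ell-1}'f_\ell)^2)$, and summing in $\ell$ before taking the $L_p$-norm produces the first half of the claim.

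For the second inequality, the key step is to establish the pointwise majorization
\[
	\sum_{\ell\ge n} T_n\big((P_{\ell-1}'f_\ell)^2\big) \;\lesssim\; \gamma_k(\mathscr F_n)^k\, T_{\mathscr F_n,q'',k}\Big(\sum_{\ell\ge n} f_\ell^2\Big)
\]
for a suitable $q''\in(0,1)$. First, I apply \eqref{eq:bound} to the projection $P_{\ell-1}'$ to obtain $|P_{\ell-1}'f_\ell|\lesssim T_{\ell-1}'|f_\ell|$, where $T_{\ell-1}' := T_{\mathscr F_{\ell-1},q',k'}$. Squaring and invoking Jensen's inequality \eqref{eq:jensen} with $\varphi(u)=u^2$, together with the uniform upper bound on $K_x$ from \eqref{eq:kernel}, produces the pointwise estimate
\[
	(P_{\ell-1}'f_\ell)^2 \;\lesssim\; T_{\ell-1}'(f_\ell^2).
\]
For $\ell\ge n+1$ the inclusion $\mathscr F_n\subset\mathscr F_{\ell-1}$ places us in the setting of Proposition~\ref{prop:propertiesT} (the outer operator $T_n$ corresponds to the coarser $\sigma$-algebra), so that $T_n(T_{\ell-1}'(f_\ell^2))\lesssim\gamma_k(\mathscr F_n)^k\,T_{\mathscr F_n,q'',k}(f_\ell^2)$ for any fixed $q''>\max(q,q')$. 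Summing in $\ell\ge n+1$ and using the linearity of $T_{\mathscr F_n,q'',k}$ produces the displayed inequality up to the single term $\ell=n$.

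The $L_p$-bound then follows by applying $\|\cdot\|_p$ to the pointwise inequality and using the $L_p$-boundedness of $T_{\mathscr F_n,q'',k}$ with operator norm depending only on $k$ and $q''$---this comes by Riesz--Thorin interpolation from the $L_1$ and $L_\infty$ bounds implicit in \eqref{eq:kernel}, and crucially does not depend on the particular $\sigma$-algebra $\mathscr F_n$. The main obstacle is the boundary term $\ell=n$, where $\mathscr F_{n-1}\subsetneq\mathscr F_n$ reverses the order required by Proposition~\ref{prop:propertiesT}. I would handle that single term separately in $L_p$: the bound $(P_{n-1}'f_n)^2\lesssim T_{n-1}'(f_n^2)$ from above, combined with the uniform $L_p$-boundedness of $T_n$ and $T_{n-1}'$, yields $\|T_n((P_{n-1}'f_n)^2)\|_p \lesssim \|f_n^2\|_p \le \|\sum_{\ell\ge n} f_\ell^2\|_p$, which is absorbed into the overall constant since $\gamma_k(\mathscr F_n)\ge 1$.
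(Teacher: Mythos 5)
Your argument is correct and follows essentially the same route as the paper's proof: pointwise domination of $P_{\ell-1}'$ by $T_{\ell-1}'$, Jensen's inequality \eqref{eq:jensen} to get $(P_{\ell-1}'f_\ell)^2\lesssim T_{\ell-1}'(f_\ell^2)$, the tower property of Proposition~\ref{prop:propertiesT} for the terms $\ell>n$ with a $q''\in(\max(q,q'),1)$, and a separate $L_p$-treatment of the boundary term $\ell=n$ followed by the uniform $L_p$-boundedness of the resulting $T$-operator. This matches the paper's chain of estimates (which uses $U_n=T_{\mathscr F_n,\max(q,q')^{1/2},k}$), so no further comment is needed.
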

We remark that by Jensen's inequality and the tower property,
this is trivial for conditional
expectations $\mathbb E(\cdot | \mathscr F_n)$ instead of the operators $P_n, T_n,
P_{\ell-1}'$ even with
an absolute constant on the right hand side. 
\begin{proof}
	We denote by $T_n$ the operator $T_{\mathscr F_n, q, k}$  and by $T_n'$
	the operator $T_{\mathscr F_n, q', k'}$, where the
	parameters $q,q'<1$ are given by inequality \eqref{eq:bound} depending
	on $k$ and $k'$ respectively. Setting $U_n :=
	T_{\mathscr F_n, \max(q,q')^{1/2}, k}$,
	we perform the following chain of inequalities, 
	where we use the positivity of $T_n$ and
	\eqref{eq:bound}, Jensen's inequality for $T_{\ell-1}'$, the tower property
	for $T_n T_{\ell-1}'$ and the $L_p$-boundedness of $U_n$, respectively:
	\begin{align*}
		\Big\| \sum_{\ell\geq n} T_n\big( (P_{\ell-1}'
		f_\ell)^2\big)\Big\|_p  
		&\lesssim \Big\| \sum_{\ell \geq n} T_n \big( (T_{\ell-1}'
		|f_\ell|)^2\big)\Big\|_p \\ 
		&\lesssim \Big\| \sum_{\ell\geq n} T_n \big(T_{\ell-1}'
		f_\ell^2\big)\Big\|_p \\ 
		& \leq \| T_n(T_{n-1}' f_n^2) \|_p + 
			\Big\| \sum_{\ell> n} T_n \big(T_{\ell-1}'
			f_\ell^2\big)\Big\|_p \\
		&\lesssim \| f_n^2 \|_p + \gamma_{k}(\mathscr F_n)^k \cdot 
			\Big\| \sum_{\ell> n} 
			U_n(f_\ell^2)\Big\|_p	\\
		&\lesssim \gamma_{k}(\mathscr F_n)^k \cdot
		\Big\| \sum_{\ell\geq n} f_\ell^2 \Big\|_p,
	\end{align*}
	where the implied constants only depend on $k$ and $k'$.
\end{proof}

\subsection{A duality estimate using a spline square function}\label{subsect:mainduality}
In order to give the desired duality estimate contained in Theorem
\ref{thm:main_duality}, we need the following construction of a function
$g_n\in\mathscr S_k(\mathscr F_n)$ based on a spline square function.

\begin{prop}\label{prop:choose_g}
	Let $(f_n)$ be a sequence of functions with
	$f_n\in \mathscr S_k(\mathscr F_n)$ for all $n$ and set 
	\[
		X_n:=\sum_{\ell\leq n} f_\ell^2.
	\]
	Then, there exists a sequence of non-negative functions
	$g_n\in \mathscr S_{k}(\mathscr F_n)$ so that for each~$n$,
	\begin{enumerate}
		\item $g_n \leq g_{n+1}$,
		\item $X_n^{1/2} \leq  g_n$  
		\item $\mathbb E g_n  \lesssim \mathbb E X_n^{1/2}$, 
			where the implied constant depends on $k$ and on
			$\sup_{m\leq n}\gamma_{k}(\mathscr F_m)$.
	\end{enumerate}
\end{prop}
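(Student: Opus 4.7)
My plan is to define $g_n$ via its B-spline expansion in $\mathscr S_k(\mathscr F_n)$, with coefficients that encode local $L_\infty$-norms of $X_n^{1/2}$. Writing $(N_{n,i})_i$ for the B-spline basis of $\mathscr S_k(\mathscr F_n)$ and $(E_{n,i})_i$ for its supports, I would first analyze the direct candidate
\[
\tilde g_n := c_k\sum_i \|X_n^{1/2}\|_{L_\infty(E_{n,i})}\, N_{n,i},
\]
with $c_k$ a sufficiently large constant depending on $k$. Property~(2) is immediate from the non-negative partition-of-unity property of $(N_{n,i})$: at any $x$, $\tilde g_n(x)$ is a convex combination of coefficients taken over the indices $i$ with $x\in E_{n,i}$, each of which dominates $c_k X_n^{1/2}(x)$.

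For property~(3), the key observation is that $X_n=\sum_{\ell\le n}f_\ell^2$ is a piecewise polynomial of order $2k-1$ on the atoms of $\mathscr F_n$, since each $f_\ell\in\mathscr S_k(\mathscr F_\ell)\subset\mathscr S_k(\mathscr F_n)$. Applying Corollary~\ref{cor:remez} to $X_n$ on each atom $Q$ gives $\|X_n^{1/2}\|_{L_\infty(Q)}|Q|\lesssim\int_Q X_n^{1/2}$ with constant depending only on $k$. The $k$-regularity assumption on $\mathscr F_n$ ensures that atoms within any $E_{n,i}$ have comparable lengths, so a second application of Remez extends this to $\|X_n^{1/2}\|_{L_\infty(E_{n,i})}|E_{n,i}|\lesssim\int_{E_{n,i}}X_n^{1/2}$. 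Summing, with each point in at most $k$ supports, produces $\mathbb E\tilde g_n\lesssim\mathbb E X_n^{1/2}$, with a constant depending on $k$ and $\gamma_k(\mathscr F_n)$.

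The main obstacle is property~(1): the direct candidate $\tilde g_n$ need not satisfy $\tilde g_n\le\tilde g_{n+1}$ pointwise because refining $\mathscr F_n$ to $\mathscr F_{n+1}$ strictly shrinks the B-spline supports. I would enforce monotonicity by replacing the direct definition with an inductive one: given $g_{n-1}\in\mathscr S_k(\mathscr F_{n-1})\subset\mathscr S_k(\mathscr F_n)$, use Theorem~\ref{thm:expansion} to re-expand it in the finer $\mathscr F_n$-basis, $g_{n-1}=\sum_i\beta_i N_{n,i}$, each $\beta_i$ a convex combination of $g_{n-1}$'s coefficients at level $n-1$ over supports containing $E_{n,i}$, and set
\[
g_n:=\sum_i \max\bigl(\beta_i,\ c_k\|X_n^{1/2}\|_{L_\infty(E_{n,i})}\bigr)\, N_{n,i}.
\]
Properties~(1) and (2) are then automatic by coefficientwise domination. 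By induction one verifies $\beta_i\ge c_k\|X_{n-1}^{1/2}\|_{L_\infty(E_{n,i})}$ (since each contributing level-$n-1$ coefficient dominates $c_k\|X_{n-1}^{1/2}\|$ on its larger support), so the increment $\mathbb E g_n-\mathbb E g_{n-1}$ is bounded by $c_k\sum_i (\|X_n^{1/2}\|_{L_\infty(E_{n,i})}-\|X_{n-1}^{1/2}\|_{L_\infty(E_{n,i})})\int N_{n,i}$. The technical crux, and the step I expect to require the most care, is tightening this telescoping sum to an $n$-uniform bound by $\mathbb E X_N^{1/2}$; the naive estimate using the $L_\infty$-triangle inequality together with $X_n^{1/2}-X_{n-1}^{1/2}\le|f_n|$ only yields $\sum_n\|f_n\|_1$, so a sharper argument exploiting the identity $f_n^2=(X_n^{1/2}-X_{n-1}^{1/2})(X_n^{1/2}+X_{n-1}^{1/2})$ together with Remez-type controls tying the sup-norm increments back to $\int(X_n^{1/2}-X_{n-1}^{1/2})$ on each support will be needed to close the argument with a constant that depends only on $k$ and $\sup_{m\le n}\gamma_k(\mathscr F_m)$.
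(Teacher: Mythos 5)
Your construction of $g_n$ (coefficients given by maxima of local sup-norms of $X^{1/2}$, made monotone by re-expanding via Theorem~\ref{thm:expansion} and taking coefficientwise maxima) is close in spirit to the paper's, and your verification of properties (1) and (2) is fine. The problem is that property (3) --- the only genuinely difficult part of the proposition --- is not proved. You acknowledge this yourself: for your inductive $g_n$ the telescoping bound on $\mathbb E g_n-\mathbb E g_{n-1}$ via sup-norm increments only yields $\sum_n\|f_n\|_1$, which is not controlled by $\mathbb E X_N^{1/2}$, and the "sharper argument" you defer to is precisely the missing core of the proof. The essential obstruction is multi-scale: after taking maxima over previous levels, a coefficient attached to a fine support $E_{n,j}$ may carry the sup-norm of some $X_\ell$ on a much coarser atom, so many fine terms are charged to the same coarse region and no local Remez/telescoping identity converts the sum into $\int X_n^{1/2}$ without overcounting. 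The paper resolves this with a combinatorial disjointification (Lemma~\ref{lem:phi}): using Corollary~\ref{cor:remez}, inside each relevant coarse atom $D_j$ the set where $X_{\ell(j)}$ is comparable to its sup has measure at least $|D_j|/2$, and a greedy selection --- whose measure hypothesis \eqref{eq:ass_lemma} is verified using the regularity parameter and the bounded multiplicity of atoms --- produces pairwise disjoint sets $\varphi(j)\subseteq D_j$ of size $\simeq|J_{n,j}|$ on which one integrates $X_{\ell(j)}^{1/2}\le X_n^{1/2}$. Nothing playing this role appears in your proposal.

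A secondary, but genuine, error is in your treatment of the direct candidate $\tilde g_n$: the claim that $k$-regularity makes the atoms inside a support $E_{n,i}$ comparable is false (bounded $\gamma_k$ only compares lengths of neighbouring B-spline supports; e.g.\ atoms of lengths $\dots,1,\varepsilon,1,\dots$ have $\gamma_2\lesssim 2$), and the per-support inequality $\|X_n^{1/2}\|_{L_\infty(E_{n,i})}|E_{n,i}|\lesssim\int_{E_{n,i}}X_n^{1/2}$ can fail when the sup of $X_n$ lives on a tiny atom at the edge of $E_{n,i}$. The correct way to localize, used in the paper as \eqref{eq:Xl_infty_estimate}, is to exploit that the $f_m$ are splines: B-spline stability (Theorem~\ref{thm:stability}, via \eqref{eq:stab_estimate}) controls $\|X_\ell\|_{L_\infty(E_{\ell,r})}$ by the sup of $X_\ell$ over a largest atom of a neighbouring support, and only then is Remez applied on that single atom. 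So both the localization step and the disjointification step needed for (3) are missing or incorrect, and the argument does not close.
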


For the proof of this result, we need the following simple lemma.
\begin{lem}\label{lem:phi}
	Let $c_1$ be a positive constant and
	let $(A_j)_{j=1}^N$ be a sequence of atoms in $\mathscr F_n$.
	Moreover, let
	$\ell : \{1,\ldots, N\} \to \{1,\ldots, n\}$ and, for each $j\in
	\{1,\ldots, N\}$, let $B_j$ be a subset of an atom $D_j$ of $\mathscr
	F_{\ell(j)}$ with
	\begin{equation}
		\label{eq:ass_lemma}
		|B_j| \geq c_1 \sum_{\substack{i:\ell(i)\geq\ell(j), \\
			D_i\subseteq D_j}}
		|A_i |.
	\end{equation}

	Then, there exists a set-valued mapping $\varphi$ on $\{1,\ldots, N\}$
	so that 
	\begin{enumerate}
		\item $|\varphi(j)| = c_1 |A_j|$ for all $j$,
		\item $\varphi(j) \subseteq B_j$ for all $j$,
		\item $\varphi(i) \cap \varphi(j) = \emptyset$ for all $i\neq
			j$.
	\end{enumerate}
\end{lem}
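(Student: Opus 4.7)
The plan is to construct $\varphi$ greedily by processing the indices $j$ in order of decreasing $\ell(j)$, breaking ties arbitrarily. The structural input that makes everything work is that the family $\{D_j\}_{j=1}^N$ is laminar: since each $D_j$ is an atom of $\mathscr F_{\ell(j)}$ and the $(\mathscr F_n)$ are nested by refinement, any two $D_i,D_j$ are either disjoint or comparable under inclusion; in particular, if $\ell(i)=\ell(j)$ then $D_i,D_j$ are atoms of the same $\sigma$-algebra, so they either coincide or are disjoint.

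At step $j$, I would select $\varphi(j)$ to be an arbitrary measurable subset of $B_j$ of measure $c_1|A_j|$ that is disjoint from every previously placed $\varphi(i)$. The existence of such a subset reduces to showing that the total measure already occupied inside $B_j$ is at most $|B_j|-c_1|A_j|$. To see this, suppose $\varphi(i)$ was placed earlier and $\varphi(i)\cap B_j\neq\emptyset$. Then $\varphi(i)\subseteq D_i$ and $B_j\subseteq D_j$ force $D_i\cap D_j\neq\emptyset$, so by laminarity $D_i\subseteq D_j$ or $D_j\subseteq D_i$. A strict inclusion $D_j\subsetneq D_i$ forces $\ell(j)>\ell(i)$, which contradicts the processing order; the degenerate case $D_i=D_j$ with $\ell(i)<\ell(j)$ is ruled out for the same reason. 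Hence $D_i\subseteq D_j$ and $\ell(i)\geq\ell(j)$, so $i$ is one of the indices appearing in the sum on the right-hand side of \eqref{eq:ass_lemma}.

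Consequently, the measure of the union of previously placed $\varphi(i)$ inside $B_j$ is at most
\[
\sum_{\substack{i\neq j,\ \ell(i)\geq\ell(j)\\ D_i\subseteq D_j}} |\varphi(i)| = c_1\!\!\!\sum_{\substack{i\neq j,\ \ell(i)\geq\ell(j)\\ D_i\subseteq D_j}} |A_i|,
\]
which by hypothesis \eqref{eq:ass_lemma} is bounded by $|B_j|-c_1|A_j|$. Thus there is always enough free room inside $B_j$ to carve out a measurable $\varphi(j)$ of measure exactly $c_1|A_j|$, and properties (1)--(3) then hold by construction.

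The only delicate point I anticipate is the laminar dichotomy at equal levels together with the degenerate case $D_i=D_j$; once the processing order and the tiebreaking rule are fixed, the remainder is routine set-theoretic bookkeeping, and in particular no Hall-type matching argument is needed.
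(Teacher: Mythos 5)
Your proposal is correct and is essentially the paper's own argument: the paper also orders the indices so that $\ell$ is decreasing and builds $\varphi$ greedily, using the nestedness of atoms (each earlier $D_i$ meeting $D_j$ in positive measure must satisfy $D_i\subseteq D_j$, $\ell(i)\geq\ell(j)$) together with the fact that the term $i=j$ is included in \eqref{eq:ass_lemma} to guarantee room of measure $c_1|A_j|$ inside $B_j$. The only cosmetic difference is that intersections should be measured modulo null sets (atoms may share endpoints), which the paper phrases as ``either $D_i\subset D_{j_0+1}$ or $|D_i\cap D_{j_0+1}|=0$'' and which does not affect your measure count.
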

\begin{proof}
	Without restriction, we assume that the sequence $(A_j)$ is enumerated
	such that $\ell(j+1) \leq \ell(j)$ for all $1\leq j\leq N-1$. We first
	choose $\varphi(1)$ as an arbitrary (measurable) subset of $B_1$ with
	measure $c_1|A_1|$, which is possible by assumption
	\eqref{eq:ass_lemma}. 
	Next, we assume that for $1\leq j\leq j_0$, we have constructed
	$\varphi(j)$ with the properties
	\begin{enumerate}
		\item $|\varphi(j)| = c_1|A_j|$, 
		\item $\varphi(j)\subseteq B_j$, 
		\item $\varphi(j) \cap \cup_{i<j} \varphi(i) = \emptyset$.
	\end{enumerate}
	Based on that, we now construct $\varphi(j_0 + 1)$. Define the index
	sets
	$\Gamma = \{ i : \ell(i) \geq \ell(j_0 +1), D_i \subseteq D_{j_0+1} \}$
	and $\Lambda = \{ i : i\leq j_0+1, D_i\subseteq D_{j_0+1} \}$. Since we
	assumed that $\ell$ is decreasing, we have $\Lambda \subseteq \Gamma$
	and by the nestedness of the $\sigma$-algebras $\mathscr F_n$, we have 
	for $i\leq j_0+1$ that either $D_i\subset D_{j_0+1}$ or $|D_i \cap
	D_{j_0+1}|=0$. This implies
	\begin{align*}
		\Big| B_{j_0+1}\setminus \bigcup_{i\leq j_0} \varphi(i) \Big|
		&= |B_{j_0 + 1}| - \Big| B_{j_0+1}\cap \bigcup_{i\leq j_0}
		\varphi(i) \Big| \\
		&\geq c_1 \sum_{i\in \Gamma} |A_i| - \Big| D_{j_0+1}
		\cap\bigcup_{i\leq j_0}
		\varphi(i) \Big| \\
		&\geq c_1 \sum_{i\in \Lambda} |A_i| - \Big| 
		\bigcup_{i\in\Lambda\setminus\{j_0+1\}}
		\varphi(i) \Big| \\
		&\geq c_1\sum_{i\in \Lambda} |A_i| -
		\sum_{i\in\Lambda\setminus \{j_0+1\}} c_1 |A_i| = c_1
		|A_{j_0+1}|.
	\end{align*}
	Therefore, we can choose $\varphi(j_0 +1) \subseteq B_{j_0+1}$ that is
	disjoint to $\varphi(i)$ for any $i\leq j_0$	and $|\varphi(j_0+1)| =
	c_1 |A_{j_0+1}|$ which completes the proof.
\end{proof}

\begin{proof}[Proof of Proposition \ref{prop:choose_g}]
	Fix $n$ and let $(N_{n,j})$ be the B-spline basis of $\mathscr
	S_{k}(\mathscr F_n)$.
	Moreover, for any $j$, set $E_{n,j} = \supp N_{n,j}$ and $a_{n,j} :=
	\max_{\ell\leq n} \max_{r : E_{\ell,r} \supset E_{n,j}}\|
	X_{\ell}
	\|_{L_\infty(E_{\ell,r})}^{1/2}$  and we define $\ell(j)\leq n$ and $r(j)$
	so that  $E_{\ell(j),r(j)} \supseteq E_{n,j} $ and 
	$a_{n,j} = \| X_{\ell(j)} \|_{L_\infty(E_{\ell(j),r(j)})}^{1/2}$.
	Set	
	\[
		g_n := \sum_j a_{n,j} N_{n,j} \in \mathscr S_{k}(\mathscr F_n)
	\]
	and it will be proved subsequently that this $g_n$ has the desired properties. 

	\textsc{Property (1):}
	In order to show $g_n\leq g_{n+1}$, we use 
	Theorem \ref{thm:expansion} to write 
			\[
				g_n = \sum_j a_{n,j} N_{n,j} = \sum_j
				\beta_{n,j} N_{n+1,j},
			\]
			where $\beta_{n,j}$ is a convex combination of those
			$a_{n,r}$ with $E_{n+1,j} \subseteq E_{n,r}$, and thus
			\[
				g_n \leq \sum_j \big(\max_{r: E_{n+1,j}\subseteq
				E_{n,r}} a_{n,r}\big) N_{n+1,j}.
			\]
			By the very definition of $a_{n+1,j}$,
			we have
			\[
				\max_{r: E_{n+1,j} \subseteq E_{n,r}} a_{n,r}
				\leq a_{n+1,j},
			\]
			and therefore, $g_n\leq g_{n+1}$ pointwise, since the
			B-splines $(N_{n+1,j})_j$ are nonnegative functions.

			\textsc{Property (2): } Now we show that $X_n^{1/2}\leq g_n$. 
			Indeed, for any $x\in[0,1]$,
			\[
				g_n(x) = \sum_j a_{n,j} N_{n,j}(x) \geq \min_{j
					: E_{n,j}\ni x} a_{n,j} \geq 
					\min_{j: E_{n,j}\ni x}
					\|X_n\|_{L_\infty(E_{n,j})}^{1/2}\geq
				X_n(x)^{1/2},
			\]
			since the collection of B-splines $(N_{n,j})_j$ forms a partition of unity.

		\textsc{Property (3): } Finally, we show $\mathbb E g_n \lesssim
		\mathbb E X_n^{1/2}$, where the implied constant depends only on
		$k$ and on $\sup_{m\leq n} \gamma_k(\mathscr F_m)$.
	By B-spline stability (Theorem~\ref{thm:stability}), we estimate 
	the integral of $g_n$ by  
	\begin{equation}\label{eq:expgn}
		\mathbb E g_n \lesssim \sum_j |E_{n,j}| \cdot
		\|X_{\ell(j)}\|_{L_\infty(E_{\ell(j),r(j)})}^{1/2},
	\end{equation}
	where the implied constant only depends on $k$.
	In order to continue the estimate, we next show the inequality
	\begin{equation}\label{eq:Xl_infty_estimate}
		\| X_{\ell} \|_{L_\infty(E_{\ell,r})} \lesssim \max_{s :
			|E_{\ell,r} \cap E_{\ell,s}| >0}
			\|X_{\ell}\|_{L_\infty(J_{\ell,s})},
		\end{equation}
		where by $J_{\ell,s}$ we denote an atom of $\mathscr F_\ell$
		with $J_{\ell,s} \subset E_{\ell,s}$ of maximal length and the
		implied constant depends only on $k$.
		Indeed,
		we use Theorem~\ref{thm:stability} in the form of
	\eqref{eq:stab_estimate} to get ($f_m\in\mathscr S_k(\mathscr F_\ell)$
	for $m\leq \ell$)
	\begin{equation}\label{eq:1}
		\begin{aligned}
		\|X_\ell\|_{L_\infty(E_{\ell,r})} &\leq \sum_{m\leq \ell}
		\|f_m\|_{L_\infty(E_{\ell,r})}^2 \\
		&\lesssim \sum_{m\leq \ell} 
		\sum_{s : |E_{\ell,s} \cap E_{\ell,r}|>0}
		\|f_m\|_{L_\infty(J_{\ell,s})}^2 = \sum_{s :
			|E_{\ell,s}\cap E_{\ell,r}|>0} \sum_{m\leq \ell} \| f_m
			\|_{L_\infty(J_{\ell,s})}^2.
		\end{aligned}
	\end{equation}
	Now observe that for atoms $I$ of $\mathscr F_\ell$, due to the equivalence
	of $p$-norms of polynomials (cf. Corollary \ref{cor:remez}),
	\[
		\sum_{m\leq \ell} \|f_m\|_{L_\infty(I)}^2 \lesssim
		 \sum_{m\leq \ell}\frac{1}{|I|} \int_I f_m^2 = \frac{1}{|I|}
		\int_I X_\ell \leq \|X_\ell\|_{L_\infty(I)},
	\]
	which means that, inserting this in estimate \eqref{eq:1}, 
	\[
		\|X_\ell\|_{L_\infty(E_{\ell,r})} \lesssim \sum_{s :
			|E_{\ell,s}\cap E_{\ell,r}|>0}
		\|X_\ell\|_{L_\infty(J_{\ell,s})},
	\]
	and, since there are at most $k$ indices $s$ so that $|E_{\ell,s} \cap
	E_{\ell,r}|>0$, we have established \eqref{eq:Xl_infty_estimate}.

	We define $K_{\ell,r}$ to be an interval $J_{\ell,s}$ with
	$|E_{\ell,r}\cap E_{\ell,s}|>0$ so that 
	\[
		\max_{s :
			|E_{\ell,r} \cap E_{\ell,s}| >0}
			\|X_{\ell}\|_{L_\infty(J_{\ell,s})} =
			\|X_{\ell}\|_{L_\infty(K_{\ell,r})}.
	\]
	This means, after combining \eqref{eq:expgn} with
	\eqref{eq:Xl_infty_estimate}, we have
	\begin{equation}\label{eq:gn2}
		\mathbb E g_n \lesssim \sum_j|J_{n,j}|\cdot
		\|X_{\ell(j)}\|_{L_\infty(K_{\ell(j),r(j)})}^{1/2}.
	\end{equation}
	We now apply Lemma \ref{lem:phi} with the function $\ell$ and the choices
	\begin{align*}
		A_j &= J_{n,j}, \qquad D_j = K_{\ell(j),r(j)}, \\
		B_j &= \Big\{ t\in D_j : X_{\ell(j)}(t) \geq 8^{-2(k-1)} \|
		X_{\ell(j)} \|_{L_\infty(D_j)} \Big\}.
	\end{align*}
	In order to see assumption \eqref{eq:ass_lemma} of Lemma \ref{lem:phi},
	fix the index $j$ and
	let $i$ be such that $\ell(i)\geq \ell(j)$.
	By definition of $D_i = K_{\ell(i), r(i)}$, the smallest interval
	containing $J_{n,i}$ and $D_i$ contains at most $2k-1$ atoms of
	$\mathscr F_{\ell(i)}$ and, if $D_{i}\subset D_j$, the smallest interval
	containing $J_{n,i}$ and $D_j$ contains at most $2k-1$ atoms of
	$\mathscr F_{\ell(j)}$. This means that, in particular, $J_{n,i}$ is a
	subset of the union $V$ of $4k$ atoms of $\mathscr F_{\ell(j)}$ with
	$D_j\subset V$. Since each atom of $\mathscr F_n$ occurs at most $k$
	times in the sequence $(A_j)$, there exists a constant $c_1$ depending
	on $k$ and $\sup_{u\leq \ell(j)} \gamma_k(\mathscr F_u)\leq \sup_{u\leq
	n} \gamma_k(\mathscr F_u)$ so that 
	\[
		|D_j| \geq c_1 \sum_{\substack{i:\ell(i)\geq \ell(j) \\
		D_i\subset D_j}} |A_i|,
	\]
	which, since $|B_j|\geq |D_j|/2$ by Corollary \ref{cor:remez}, 
	shows the assumption of Lemma \ref{lem:phi} 
	and we get a set-valued function
	$\varphi$ so that $|\varphi(j)| = c_1|J_{n,j}|/2$, $\varphi(j) \subset
	B_j$, $\varphi(i)\cap \varphi(j)=\emptyset$ for all $i,j$.
	Using these properties of $\varphi$, we
	continue the estimate in \eqref{eq:gn2} and write
	\begin{align*}
		\mathbb E g_n &\lesssim \sum_j |J_{n,j}| \cdot \|X_{\ell(j)}
		\|_{L_\infty( D_j)}^{1/2} \leq 8^{k-1}\cdot\sum_{j}
		 \frac{|J_{n,j}|}{|\varphi(j)|} \int_{\varphi(j)}
		X_{\ell(j)}^{1/2} \\
		&=\frac{2}{c_1} \cdot 8^{k-1} \cdot\sum_j \int_{\varphi(j)}
		X_{\ell(j)}^{1/2} \\
		&\lesssim \sum_j \int_{\varphi(j)} X_n^{1/2} \leq
		\mathbb E X_n^{1/2},
	\end{align*}
	with constants depending only on $k$ and $\sup_{u\leq
	n}\gamma_k(\mathscr F_u)$.
\end{proof}

Employing this construction of $g_n$, we  now give 
the following duality estimate for spline projections (for the martingale case,
see for instance \cite{Garsia1973}). The martingale version of this result is the 
essential estimate in 
the proof of both L\'{e}pingle's inequality \eqref{eq:lepingle} and the 
$H^1$-$\BMO$ duality.

\begin{thm}\label{thm:main_duality}
Let $(\mathscr F_n)$ be such that
	$\gamma:=\sup_{n}
	\gamma_k(\mathscr F_n) < \infty$ and $(f_n)_{n\geq 1}$ a sequence of functions
	with
	$f_n\in \mathscr S_k(\mathscr F_n)$  for each $n$. Additionally, let $h_n\in L_1$ be
	arbitrary. Then, for any $N$,
	\[
		\sum_{n\leq N} \mathbb E[|f_n \cdot h_n|] \lesssim \mathbb
		E\Big[
		\Big(\sum_{\ell\leq N} f_\ell^2\Big)^{1/2}\Big] \cdot \sup_{n\leq N}
		\|P_n\big( \sum_{\ell=n}^N h_\ell^2 \big)\|_\infty^{1/2},
	\]
	where the implied constant depends only on $k$ and $\gamma$.
\end{thm}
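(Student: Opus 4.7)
The plan is to run Herz's Cauchy--Schwarz argument indicated in \eqref{eq:herz_idea}, but with the ``bad'' square function $S_n = (\sum_{\ell\leq n} f_\ell^2)^{1/2}$ (which is generally not a spline) replaced by the spline majorant $g_n\in\mathscr S_k(\mathscr F_n)$ furnished by Proposition~\ref{prop:choose_g}. Applying Cauchy--Schwarz to $\mathbb E[|f_n h_n|] = \mathbb E[(|f_n|/g_n^{1/2})\cdot(g_n^{1/2}|h_n|)]$ separately for each $n$, and then a second (discrete) Cauchy--Schwarz to the resulting sum, I obtain
\begin{equation*}
\sum_{n\leq N}\mathbb E[|f_n h_n|]\leq\bigg(\sum_{n\leq N}\mathbb E\bigl[f_n^2/g_n\bigr]\bigg)^{1/2}\bigg(\sum_{n\leq N}\mathbb E\bigl[g_n h_n^2\bigr]\bigg)^{1/2},
\end{equation*}
so everything comes down to bounding the first factor by $\mathbb E[X_N^{1/2}]$ and the second by $\mathbb E[X_N^{1/2}]\cdot\sup_{n\leq N}\|P_n H_n\|_\infty$, where $X_n=\sum_{\ell\leq n}f_\ell^2$ and $H_n=\sum_{\ell=n}^N h_\ell^2$.

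The first factor is a pointwise matter. Since $g_n\geq X_n^{1/2}$ by Proposition~\ref{prop:choose_g}(2) and $f_n^2=X_n-X_{n-1}$ (with $X_0:=0$), the elementary inequality $a-b\leq 2\sqrt a(\sqrt a-\sqrt b)$ for $0\leq b\leq a$ combined with telescoping gives
\begin{equation*}
\sum_{n\leq N}\frac{f_n^2}{g_n}\leq\sum_{n\leq N}\frac{X_n-X_{n-1}}{X_n^{1/2}}\leq 2X_N^{1/2},
\end{equation*}
so the first factor is at most $\sqrt{2\mathbb E[X_N^{1/2}]}$.

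For the second factor I would use Abel summation. With $h_n^2=H_n-H_{n+1}$, $H_{N+1}:=0$, and $g_0:=0$, monotonicity from Proposition~\ref{prop:choose_g}(1) rewrites
\begin{equation*}
\sum_{n\leq N}g_n h_n^2 = \sum_{n\leq N}(g_n-g_{n-1}) H_n,
\end{equation*}
and now each increment $g_n-g_{n-1}$ is non-negative and lies in $\mathscr S_k(\mathscr F_n)$, so $P_n$ fixes it. Self-adjointness of $P_n$ converts $\mathbb E[(g_n-g_{n-1})H_n]$ into $\mathbb E[(g_n-g_{n-1})P_n H_n]$, the non-negativity of the increment lets me pull out $\|P_n H_n\|_\infty$, and a telescoping $\sum_n\mathbb E[g_n-g_{n-1}]=\mathbb E[g_N]$ followed by Proposition~\ref{prop:choose_g}(3) (this is the step that invokes the hypothesis $\gamma<\infty$) produces the desired bound $\sum_n\mathbb E[g_n h_n^2]\lesssim\sup_{n\leq N}\|P_n H_n\|_\infty\cdot\mathbb E[X_N^{1/2}]$.

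The delicate point is the identity $\mathbb E[(g_n-g_{n-1})H_n]=\mathbb E[(g_n-g_{n-1})P_n H_n]$: since $P_n$ is not a positive operator, $P_n H_n$ can change sign and cannot be compared pointwise to $H_n$. What saves the argument is the Abel rearrangement, which pairs the possibly signed $P_n H_n$ with the \emph{non-negative} spline increment $g_n-g_{n-1}$, so that the sign of $P_n H_n$ costs only one application of $L_\infty$-norm---which is exactly the quantity appearing on the right-hand side of the theorem. The three properties (1)--(3) of $g_n$ in Proposition~\ref{prop:choose_g} are precisely calibrated for this scheme: (2) dominates the first factor, (1) enables the Abel step, and (3) controls the final expectation of $g_N$ by $\mathbb E[X_N^{1/2}]$.
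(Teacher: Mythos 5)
Your proposal is correct and follows essentially the same route as the paper's own proof: Cauchy--Schwarz after inserting the spline majorant $g_n$ from Proposition~\ref{prop:choose_g}, the telescoping bound $\sum_n (X_n-X_{n-1})/X_n^{1/2}\leq 2X_N^{1/2}$ for the first factor, and Abel summation plus $P_n(g_n-g_{n-1})=g_n-g_{n-1}$, self-adjointness of $P_n$, non-negativity of the increments, and property (3) of $g_n$ for the second factor. Your remark about why the Abel rearrangement neutralizes the non-positivity of $P_n$ matches exactly the role this step plays in the paper.
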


\begin{proof}
Let $X_n:= \sum_{\ell\leq n} f_\ell^2$ and $f_\ell\equiv 0$ for $\ell > N$ and
$\ell\leq 0$. By Proposition
\ref{prop:choose_g}, we choose an increasing sequence $(g_n)$ of functions with $g_0=0$, $g_n\in
\mathscr S_{k}(\mathscr F_n)$ and the 
properties $X_n^{1/2}\leq g_n$ and $\mathbb E g_n \lesssim \mathbb E X_n^{1/2}$ where the
implied constant depends only on $k$ and $\gamma$.
Then, apply Cauchy-Schwarz inequality by introducing the factor $g_n^{1/2}$ to
get
\[
	\sum_n\mathbb E [ |f_n \cdot h_n|] =\sum_n \mathbb E \Big[
	\Big|\frac{f_n}{g_n^{1/2}}\cdot g_n^{1/2} h_n\Big|\Big] \leq 
	\Big[ \sum_n \mathbb E  [f_n^2/g_n] \Big]^{1/2} \cdot \Big[
	\sum_n	\mathbb E[g_n h_n^2]
	\Big]^{1/2}.
\]
We estimate each of the factors on the right hand side separately and set
\[
	\Sigma_1 :=\sum_n \mathbb E  [f_n^2/g_n], \qquad \Sigma_2:=\sum_n\mathbb
	E[g_n h_n^2].
\]
The first factor is estimated by the pointwise inequality
$X_n^{1/2}\leq g_n$:
\begin{align*}
\Sigma_1 = \mathbb E \big[ \sum_n \frac{f_n^2}{g_n}\big]  &\leq 
\mathbb E \big[ \sum_n \frac{f_n^2}{X_n^{1/2}}\big]  \\
&= \mathbb E \big[\sum_n \frac{X_n - X_{n-1}}{X_n^{1/2}} \big] \leq 2\mathbb
E \sum_n (X_n^{1/2} - X_{n-1}^{1/2}) = 2 \mathbb E X_N^{1/2}.
\end{align*}
We continue with $\Sigma_2$:
\begin{align*}
	\Sigma_2 &= \mathbb E \big[ \sum_{\ell=1}^N g_\ell h_\ell^2 \big] = \mathbb E
	\Big[ \sum_{\ell=1}^N \sum_{n=1}^\ell (g_n - g_{n-1}) h_\ell^2 \Big] \\
	&= \mathbb E \Big[ \sum_{n=1}^N (g_n - g_{n-1}) \cdot\sum_{\ell=n}^N
	h_\ell^2 \Big] \\
	&= \mathbb E \Big[ \sum_{n=1}^N P_n(g_n - g_{n-1}) \cdot\sum_{\ell=n}^N
	h_\ell^2 \Big] \\
	&=\mathbb E \Big[ \sum_{n=1}^N (g_n - g_{n-1}) \cdot P_n\big(\sum_{\ell=n}^N
	h_\ell^2\big) \Big] \\
	&\leq \mathbb E \Big[\sum_{n=1}^N (g_n - g_{n-1})\Big]
	\cdot \sup_{1\leq n\leq N} \big\| P_n \big(\sum_{\ell=n}^N h_\ell^2
	\big) \big\|_\infty,
\end{align*}
where the last inequality follows from $g_n\geq g_{n-1}$.
Noting that by the properties of $g_n$, $\mathbb E \big[\sum_{n=1}^N (g_n - g_{n-1})\big] = \mathbb E g_N \lesssim
\mathbb E X_N^{1/2}$ and combining the two parts $\Sigma_1$ and $\Sigma_2$, we
obtain the conclusion. 
\end{proof}

\section{Applications}\label{sec:apps}
We give two applications of Theorem~\ref{thm:main_duality}, (i) D.~L\'{e}pingle's inequality and 
(ii) an analogue of C.~Fefferman's $H_1$-$\BMO$ duality in the setting of splines. 
Once the results from Section 3 are known, the proofs of
the subsequent results proceed similarly to their martingale counterparts
in \cite{Lepingle1978} and \cite{Garsia1973} by using spline properties
instead of martingale properties. 

\subsection{L\'{e}pingle's inequality for splines}
\begin{thm}\label{thm:lepingle_splines}
	Let $k,k'$ be positive integers.
	Let $(\mathscr F_n)$ be an interval filtration with $\sup_n
	\gamma_k(\mathscr F_n)<\infty$ and, for any $n$, $f_n \in
	\mathscr S_{k}(\mathscr F_n)$ and $P_n'$ be the
	orthogonal projection operator on $\mathscr S_{k'}(\mathscr F_n)$. Then,
	\[
		\| (P_{n-1}' f_n) \|_{L_1(\ell_2)} = \Big\| \big( \sum_n (P_{n-1}' f_n)^2 \big)^{1/2} \Big\|_1 \lesssim
		\Big\| \big( \sum_n f_n^2\big)^{1/2} \Big\|_1 = \| (f_n)
		\|_{L_1(\ell_2)},
	\]
	where the implied constant depends only on $k$, $k'$ and $\sup_n
	\gamma_{k}(\mathscr F_n)$.
\end{thm}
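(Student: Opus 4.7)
The plan is to argue by duality against the predual $L_\infty(\ell_2)$ of $L_1(\ell_2)$ (as recorded in Section~\ref{sec:Lplq}), reducing matters to the duality estimate of Theorem~\ref{thm:main_duality} together with the tower-type bound in Corollary~\ref{cor:tool_lepingle}. Without loss of generality we may assume that only finitely many $f_n$ are nonzero, truncating and passing to the limit at the end.

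First I would write
\[
\Big\| \Big( \sum_n (P_{n-1}'f_n)^2 \Big)^{1/2} \Big\|_1
= \sup\Big\{ \sum_n \mathbb E[P_{n-1}'f_n \cdot h_n] :
\|(h_n)\|_{L_\infty(\ell_2)} \leq 1 \Big\},
\]
and then use self-adjointness of the orthogonal projection $P_{n-1}'$ to move it onto the test sequence:
\[
\sum_n \mathbb E[P_{n-1}'f_n \cdot h_n]
= \sum_n \mathbb E[f_n \cdot P_{n-1}'h_n].
\]
Since each $f_n$ lies in $\mathscr S_k(\mathscr F_n)$ and the hypothesis $\sup_n \gamma_k(\mathscr F_n) < \infty$ matches the assumption of Theorem~\ref{thm:main_duality}, I apply that theorem with the sequence $(P_{n-1}'h_n)$ in place of $(h_n)$, obtaining
\[
\sum_n \mathbb E[|f_n \cdot P_{n-1}'h_n|]
\lesssim \Big\| \Big( \sum_\ell f_\ell^2 \Big)^{1/2} \Big\|_1
\cdot \sup_n \Big\| P_n\Big( \sum_{\ell \geq n} (P_{\ell-1}'h_\ell)^2 \Big) \Big\|_\infty^{1/2},
\]
with constants depending only on $k$ and $\sup_n \gamma_k(\mathscr F_n)$.

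Next I estimate the supremum factor using Corollary~\ref{cor:tool_lepingle} with $p = \infty$: for each $n$,
\[
\Big\| P_n\Big( \sum_{\ell \geq n} (P_{\ell-1}'h_\ell)^2 \Big) \Big\|_\infty
\lesssim \gamma_k(\mathscr F_n)^k \cdot \Big\| \sum_{\ell \geq n} h_\ell^2 \Big\|_\infty
\leq \gamma^k \cdot \|(h_\ell)\|_{L_\infty(\ell_2)}^2,
\]
where $\gamma := \sup_n \gamma_k(\mathscr F_n)$. Combining, for every test sequence with $\|(h_n)\|_{L_\infty(\ell_2)} \leq 1$ one gets $\sum_n \mathbb E[P_{n-1}'f_n \cdot h_n] \lesssim \|(f_n)\|_{L_1(\ell_2)}$, so taking the supremum over $(h_n)$ finishes the proof.

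The main obstacle is really compressed into the preliminary steps: Theorem~\ref{thm:main_duality} has already done the Herz/Cauchy--Schwarz splitting and the construction of the majorant $g_n$, and Corollary~\ref{cor:tool_lepingle} has already handled the fact that the square function $(P_{\ell-1}'h_\ell)^2$ need not lie in $\mathscr S_k(\mathscr F_\ell)$ by invoking the tower property for the positive dominators $T_n$. Beyond those inputs, the only delicate points are the self-adjointness move (legitimate because $P_{n-1}'$ is an orthogonal projection) and the innocent truncation/monotone-convergence step needed because Theorem~\ref{thm:main_duality} was stated for finite sums $n \leq N$.
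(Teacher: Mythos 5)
Your proposal is correct and follows essentially the same route as the paper: dualize the $L_1(\ell_2)$ norm against the unit ball of $L_\infty(\ell_2)$, move $P_{n-1}'$ onto the test sequence by self-adjointness, apply Theorem~\ref{thm:main_duality} with $h_n = P_{n-1}'H_n$, control the resulting supremum via Corollary~\ref{cor:tool_lepingle}, and let the truncation parameter $N\to\infty$. (Only a terminological quibble: $L_\infty(\ell_2)$ is the dual, not the predual, of $L_1(\ell_2)$, but this does not affect the argument.)
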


\begin{proof}
We first assume that $f_n=0$ for $n>N$ 
and begin by using duality
\[
	\mathbb E \big[\big( \sum_n (P_{n-1}'f_n)^2\big)^{1/2}\big] = \sup_{(H_n)}
	\mathbb E \big[ \sum_n (P_{n-1}' f_n) \cdot
	H_n\big],
\]
where sup is taken over all $(H_n)\in L_\infty(\ell_2)$ with
$\|(H_n)\|_{L_\infty(\ell_2)} =1$.  By the
self-adjointness of $P_{n-1}'$, 
\[
	\mathbb E\big[(P_{n-1}'f_n) \cdot H_n\big] = 
	\mathbb E\big[ f_n \cdot (P_{n-1}' H_n) \big].
\]
Then we apply
Theorem \ref{thm:main_duality} for $f_n$ and $h_n=P_{n-1}'H_n$ to obtain 
(denoting by $P_n$ the orthogonal projection operator onto $\mathscr
S_k(\mathscr F_n)$)
\begin{equation}\label{eq:dual_estimate_appl}
	\sum_{n\leq N} | \mathbb E [f_n\cdot h_n] | \lesssim \mathbb E\Big[
	\big(\sum_{\ell\leq N} f_\ell^2\big)^{1/2}\Big]
	\cdot \sup_{n\leq N} \Big\| P_n \big( \sum_{\ell=n}^N (P_{\ell-1}'H_\ell)^2\big)
	\Big\|_\infty^{1/2}.
\end{equation}
To estimate the right hand side, we note that for any $n$, by Corollary
\ref{cor:tool_lepingle},
\[
	\big\| P_n \big( \sum_{\ell=n}^N (P_{\ell-1}' H_\ell)^2\big) \big\|_\infty 
	\lesssim \big\| \sum_{\ell=n}^N H_\ell^2 \big\|_\infty.
\]
Therefore, \eqref{eq:dual_estimate_appl} implies
		\[
\mathbb E \big[\big( \sum_n (P_{n-1}'f_n)^2\big)^{1/2}\big] = \sup_{(H_n)}
\mathbb E \big[ \sum_n f_n \cdot ( P_{n-1}'
	H_n)\big]
			\lesssim \mathbb E\Big[
	\big(\sum_{\ell\leq N} f_\ell^2\big)^{1/2}\Big],
		\]
		with a constant depending only on $k$,$k'$ and $\sup_{n\leq N}
		\gamma_k(\mathscr F_n)$. Letting $N$ tend to infinity, we obtain
		the conclusion.
\end{proof}

\subsection{$H_1$-$\BMO$ duality for splines}

We fix an interval filtration $(\mathscr F_n)_{n=1}^\infty$, a spline order $k$ and the
orthogonal projection operators $P_n$ onto $\mathscr S_k(\mathscr F_n)$ and
additionally, we set $P_0=0$. For $f\in L_1$, 
we introduce the notation 
\[
	\Delta_n f := P_n f - P_{n-1} f,\qquad S_n(f) := \Big( \sum_{\ell=1}^n
	(\Delta_\ell f)^2\Big)^{1/2}, \qquad S(f) = \sup_n S_n(f).
\]
Observe that for $\ell < n$ and $f,g\in L_1$,
\begin{equation}\label{eq:orthogonality}
	\mathbb E [ \Delta_\ell f \cdot \Delta_n g ] = \mathbb E [ P_\ell
	(\Delta_\ell f) \cdot \Delta_n g] = \mathbb E [ \Delta_\ell f \cdot P_\ell
	(\Delta_n g)] = 0.
\end{equation}

Let $V$ be the $L_1$-closure of $\cup_n \mathscr S_k(\mathscr F_n)$. Then, the
uniform boundedness of $P_n$ on $L_1$ implies that $P_n f\to f$ in $L_1$ for
$f\in V$. Next, set 
\[
	H_{1,k} = H_{1,k}( (\mathscr F_n) ) = \{ f\in V : \mathbb E ( S(f) ) < \infty \}
\]
and equip $H_{1,k}$ with the norm $\| f \|_{H_{1,k}} = \mathbb E
S(f)$.  
Define  
\[
	\BMO_k = \BMO_k ( (\mathscr F_n) ) = \{f\in V : \sup_n \| \sum_{\ell\geq
	n}T_n\big( (\Delta_\ell
f)^2 \big)\|_\infty < \infty\}
\]
and the corresponding quasinorm 
\[
	\| f \|_{\BMO_k} = \sup_n \big\| \sum_{\ell\geq n} T_n\big((\Delta_\ell
	f)^2 \big)\big\|_\infty^{1/2},
\]
where $T_n$ is the operator from \eqref{eq:bound} that dominates $P_n$ pointwise. 

Let us now assume $\sup_n \gamma_k(\mathscr F_n) < \infty$.
In this case we identify, similarly to $H_1$-$\BMO$-duality (cf.
\cite{Fefferman1971,Herz1974,Garsia1973}), $\BMO_k$ as the
dual space of $H_{1,k}$.

First, we use the duality estimate Theorem~\ref{thm:main_duality} and
\eqref{eq:orthogonality} to prove, for $f\in H_{1,k}$ and $h\in \BMO_k$,
\[
	\big| \mathbb E \big[ (P_n f) \cdot (P_n h) \big] \big|  \leq
	\sum_{\ell\leq n} \mathbb E \big[ |\Delta_\ell f| \cdot |\Delta_\ell
	h|\big] \lesssim \| S_n(f) \|_1 \cdot \|h\|_{\BMO_k}.
\]
This estimate also implies that the limit $\lim_n \mathbb E \big[ (P_nf)\cdot
(P_nh) \big]$ exists and satisfies
\[
\big|\lim_n \mathbb E \big[ (P_nf)\cdot
(P_nh) \big]\big| \lesssim \|f\|_{H_{1,k}} \cdot \|h\|_{\BMO_k}.
\]

On the other hand, similarly to the martingale case (see \cite{Garsia1973}), 
given a continuous linear functional $L$ on $H_{1,k}$, we extend $L$
norm-preservingly to a
continuous linear functional $\Lambda$ on $L_1(\ell_2)$, which, by Section
\ref{sec:Lplq}, has the form
\[
	\Lambda(\eta) = \mathbb E \big[ \sum_\ell \sigma_\ell \eta_\ell \big],
	\qquad \eta\in L_1(\ell_2)
\]
for some $\sigma\in L_\infty(\ell_2)$. The $k$-martingale spline sequence $h_n= \sum_{\ell\leq n}
\Delta_\ell \sigma_\ell$ is bounded in $L_2$ and therefore, by the spline
convergence theorem (\eqref{it:splines5} on page \pageref{it:splines5}), has a
limit $h\in L_2$ with $P_n h = h_n$ and which is also  contained in $\BMO_k$. Indeed, by using
Corollary~\ref{cor:tool_lepingle}, we obtain $\|h\|_{\BMO_k}
\lesssim \|\sigma\|_{L_\infty(\ell_2)} = \|\Lambda\|= \|L\|$ with a constant depending only
on $k$ and $\sup_{n}\gamma_k(\mathscr F_n)$.  Moreover, for $f\in H_{1,k}$,
since $L$ is continuous on $H_{1,k}$,
\begin{align*}
L(f) =\lim_n L(P_n f) &= \lim_n \Lambda\big((\Delta_1 f,\ldots, \Delta_n
f,0,0,\ldots) \big) \\
&=\lim_n \sum_{\ell=1}^n \mathbb E[ \sigma_\ell \cdot \Delta_\ell f ]=  \lim_n \mathbb E\big[ (P_nf)\cdot (P_nh) \big].
\end{align*}
This yields the following 
\begin{thm}\label{thm:duality_splines}
	If $\sup_n \gamma_k(\mathscr F_n)<\infty$, the linear mapping 
	\[
		u : \BMO_k \to H_{1,k}^*, \qquad h \mapsto \big( f \mapsto \lim_n
		\mathbb E\big[(P_n f)\cdot (P_n h)\big] \big)
	\]
	is bijective and satisfies
	\[
		\| u(h) \|_{H_{1,k}^*} \simeq \|h\|_{\BMO_k},
	\]
	where the implied constants depend only on $k$ and $\sup_n \gamma_k(\mathscr
	F_n)$.
\end{thm}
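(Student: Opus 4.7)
The plan is to establish the theorem via the classical Garsia-style scheme: verify that $u$ is bounded by the $\BMO_k$ quasinorm, construct a bounded right-inverse $L\mapsto h$ via Hahn-Banach to obtain surjectivity together with the reverse norm inequality, and finish with a short test-function argument for injectivity. Theorem~\ref{thm:main_duality}, Corollary~\ref{cor:tool_lepingle}, the spline convergence theorem~\eqref{it:splines5}, and the pointwise domination~\eqref{eq:bound} will play the roles that the tower property, Doob's inequality, and orthogonality of martingale differences play in the classical martingale argument.

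For the upper bound $\|u(h)\|_{H_{1,k}^*}\lesssim\|h\|_{\BMO_k}$, I first expand $\mathbb E[(P_n f)(P_n h)]=\sum_{\ell,m\leq n}\mathbb E[\Delta_\ell f\cdot\Delta_m h]$ and use the orthogonality relation~\eqref{eq:orthogonality} to annihilate the off-diagonal terms, leaving $\sum_{\ell\leq n}\mathbb E[\Delta_\ell f\cdot\Delta_\ell h]$, whose absolute value is at most $\sum_{\ell\leq n}\mathbb E[|\Delta_\ell f|\cdot|\Delta_\ell h|]$. Theorem~\ref{thm:main_duality} applied with $f_\ell=\Delta_\ell f\in\mathscr S_k(\mathscr F_\ell)$ and $h_\ell=\Delta_\ell h$ bounds this by $\|S_n(f)\|_1\cdot\sup_{m\leq n}\|P_m(\sum_{\ell\geq m}(\Delta_\ell h)^2)\|_\infty^{1/2}$, and converting $P_m$ to $T_m$ via~\eqref{eq:bound} together with the positivity of $T_m$ makes the right-hand factor $\lesssim\|h\|_{\BMO_k}$. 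The same estimate applied to tails $\sum_{m<\ell\leq n}\mathbb E[\Delta_\ell f\cdot\Delta_\ell h]$ shows the sequence $\mathbb E[(P_n f)(P_n h)]$ is Cauchy (since $\|(\sum_{\ell>m}(\Delta_\ell f)^2)^{1/2}\|_1\to 0$ by dominated convergence), so $u(h)(f)$ is well-defined and satisfies the claimed bound.

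For surjectivity with the reverse norm inequality, the map $\iota\colon f\mapsto(\Delta_\ell f)_\ell$ embeds $H_{1,k}$ \emph{isometrically} into $L_1(\ell_2)$, since $\|\iota(f)\|_{L_1(\ell_2)}=\|S(f)\|_1=\|f\|_{H_{1,k}}$. Given $L\in H_{1,k}^*$, Hahn-Banach produces a norm-preserving extension $\Lambda$ of $L\circ\iota^{-1}$ to $L_1(\ell_2)$, and the duality identification in Section~\ref{sec:Lplq} yields $\sigma\in L_\infty(\ell_2)$ with $\|\sigma\|_{L_\infty(\ell_2)}=\|L\|$ and $\Lambda(\eta)=\mathbb E\sum_\ell\sigma_\ell\eta_\ell$. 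Then $h_n:=\sum_{\ell\leq n}\Delta_\ell\sigma_\ell\in\mathscr S_k(\mathscr F_n)$ is a $k$-martingale spline sequence bounded in $L_2$ (from $\sum_\ell\|\Delta_\ell\sigma_\ell\|_2^2\lesssim\mathbb E\sum_\ell\sigma_\ell^2\leq\|\sigma\|_{L_\infty(\ell_2)}^2$), so~\eqref{it:splines5} produces an $h\in V\cap L_2$ with $P_n h=h_n$ and $\Delta_n h=\Delta_n\sigma_n$. The $\BMO_k$-bound on $h$ follows by the same chain of estimates as in Corollary~\ref{cor:tool_lepingle}: write $(\Delta_\ell h)^2\leq 2(P_\ell\sigma_\ell)^2+2(P_{\ell-1}\sigma_\ell)^2$, use the pointwise bound~\eqref{eq:bound} and Jensen's inequality~\eqref{eq:jensen} to dominate each term by a constant multiple of $T_\ell(\sigma_\ell^2)+T_{\ell-1}(\sigma_\ell^2)$, and then apply the tower property (Proposition~\ref{prop:propertiesT}) to $T_n T_\ell$ and $T_n T_{\ell-1}$, producing $\sum_{\ell\geq n}T_n((\Delta_\ell h)^2)\lesssim\gamma^k U_n(\sum_\ell\sigma_\ell^2)$ for an auxiliary operator $U_n$; consequently $\|h\|_{\BMO_k}\lesssim\|\sigma\|_{L_\infty(\ell_2)}=\|L\|$. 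To check $u(h)=L$, compute $\mathbb E[(P_n f)(P_n h)]=\mathbb E[(P_n f)h_n]=\sum_{\ell\leq n}\mathbb E[\Delta_\ell f\cdot\sigma_\ell]=\Lambda(\iota(P_n f))=L(P_n f)$ using self-adjointness of $P_\ell,P_{\ell-1}$ and the identities $P_\ell P_n=P_\ell$, $P_{\ell-1}P_n=P_{\ell-1}$ for $\ell\leq n$, then pass to the limit via $\|P_n f-f\|_{H_{1,k}}=\|(\sum_{\ell>n}(\Delta_\ell f)^2)^{1/2}\|_1\to 0$.

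For injectivity, suppose $u(h)=0$. For each $m$, the element $f:=P_m h$ is bounded as a spline on $[0,1]$ and hence lies in $H_{1,k}$. For $n\geq m$ one has $P_n f=f$, and $P_n h\to h$ in $L_1$ because $h\in V$, so
\[
0=u(h)(f)=\lim_n\mathbb E[f\cdot P_n h]=\mathbb E[(P_m h)\cdot h]=\mathbb E[(P_m h)^2],
\]
the last equality using self-adjointness of $P_m$ together with $P_m^2=P_m$. Hence $P_m h=0$ for every $m$, and $h=\lim_m P_m h=0$. Combined with the surjective right-inverse constructed above, this yields bijectivity of $u$ and the full equivalence $\|u(h)\|_{H_{1,k}^*}\simeq\|h\|_{\BMO_k}$. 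I expect the most delicate step to be the control $\|h\|_{\BMO_k}\lesssim\|\sigma\|_{L_\infty(\ell_2)}$: it is the only genuinely spline-specific part of the argument, and the only place where the regularity hypothesis $\sup_n\gamma_k(\mathscr F_n)<\infty$ actually enters (through Proposition~\ref{prop:propertiesT}).
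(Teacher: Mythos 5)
Your proposal is correct and follows essentially the same route as the paper: the upper bound via Theorem~\ref{thm:main_duality} combined with the orthogonality relation \eqref{eq:orthogonality}, and surjectivity with the reverse norm bound via the isometric embedding $f\mapsto(\Delta_\ell f)$ into $L_1(\ell_2)$, Hahn--Banach, the representation by some $\sigma\in L_\infty(\ell_2)$, the spline convergence theorem \eqref{it:splines5}, and the estimate of Corollary~\ref{cor:tool_lepingle} to get $\|h\|_{\BMO_k}\lesssim\|L\|$. The only difference is that you spell out the injectivity argument explicitly (via the test functions $P_m h$), which the paper leaves implicit in the norm equivalence.
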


\begin{rem}
	We close with a few remarks concerning the above result and we assume
	that $(\mathscr F_n)$ is a sequence of increasing interval $\sigma$-algebras with 
	$\sup_n \gamma_k(\mathscr F_n) < \infty$.
	\begin{enumerate}
			\item By Khintchine's inequality, $\|Sf\|_1 \lesssim
				\sup_{\varepsilon\in \{-1,1\}^{\mathbb Z}}
				\|\sum_n\varepsilon_n\Delta_n f\|_1$. Based on
				the interval filtration $(\mathscr F_n)$, we can generate
				an interval filtration $(\mathscr G_n)$ that
				contains $(\mathscr F_n)$ as a subsequence and
				each $\mathscr G_{n+1}$ is generated from $\mathscr
				G_n$ by dividing exactly one atom of $\mathscr G_n$ into
				two atoms of $\mathscr G_{n+1}$. Denoting by
				$P_n^{\mathscr G}$ the orthogonal projection
				operator onto $\mathscr S_k(\mathscr G_n)$ and
				 $\Delta_j^{\mathscr G}= P_j^{\mathscr
				G}-P_{j-1}^{\mathscr G}$, we can write
				\[
					\sum_n\varepsilon_n\Delta_n f = \sum_n
					\varepsilon_n \sum_{j=a_n}^{a_{n+1}-1}
					\Delta_j^{\mathscr G} f
				\]	
				for some sequence $(a_n)$. By using inequalities
				\eqref{eq:Sf-1norm} and \eqref{eq:Sf-pnorm} and
				writing $(S^{\mathscr G}f)^2= \sum_j
				|\Delta_j^{\mathscr G} f|^2$,
				we obtain for $p>1$
				\[
					\|Sf\|_1 \lesssim \|S^{\mathscr G} f\|_1
					\leq \|S^{\mathscr G} f\|_p \lesssim
					\|f\|_p.
				\]
				This implies $L_p\subset H_{1,k}$
				for all $p>1$ and, by duality, $\BMO_k \subset
				L_p$ for all $p<\infty$.
			\item If $(\mathscr F_n)$ is of the form that each $\mathscr
				F_{n+1}$ is generated from $\mathscr F_n$ by
				splitting exactly one atom of $\mathscr F_n$ into two
				atoms of $\mathscr F_{n+1}$ and under the
				condition $\sup_n \gamma_{k-1}(\mathscr F_n) <
				\infty$ (which is stronger than $\sup_n
				\gamma_k(\mathscr F_n)<\infty$), 
				it is shown in \cite{4people2015} that
				\[
					\|Sf\|_{1} \simeq \|f\|_{H_1},
				\]
				where $H_1$ denotes the atomic Hardy space on
				$[0,1]$, i.e. in this case, $H_{1,k}$ coincides
				with $H_1$.

\end{enumerate}
\end{rem}

\subsection*{Acknowledgments}The author is supported by the Austrian Science Fund (FWF),
Project F5509-N26,
which is a part of the Special Research Program ``Quasi-Monte Carlo Methods: 
Theory and Applications''.

\bibliographystyle{plain}
\bibliography{steinlepingle}

\end{document}